\def\input@path{{.}{figures/}}  
\renewcommand{\d}{\mathrm{d}}	
\title{Sensitivities in complex-time flows: phase transitions, Hamiltonian structure and differential geometry 
}
\author{
  Dirk Lebiedz, Johannes Poppe \\
  Institute for Numerical Mathematics \\
  Ulm University \\
  \texttt{dirk.lebiedz@uni-ulm.de, johannespoppe92@gmail.com} \\
}
\begin{document}
\maketitle

\begin{abstract}
 Reminiscent of physical phase transitions separatrices 
    divide the phase space of dynamical
    systems with multiple equilibria into regions of distinct flow behavior 
    and asymptotics. We introduce complex time in order to study 
    corresponding Riemann surface solutions of holomorphic and 
    meromorphic flows, explicitly solve their sensitivity differential equation and identify a related Hamiltonian structure and an associated geometry in order to study separatrix properties. As an application we analyze complex-time Newton flow of Riemann's $\xi$-function on the basis of a compactly convergent polynomial approximation of its Riemann surface solution defined as zero set of polynomials, e.g. algebraic curves 
    over $\C$ (in the complex projective plane respectively), that is closely related to a complex-valued Hamiltonian system. Its geometric properties might contain information on the global separatrix structure and the root location of $\xi$ and $\xi'$.
\end{abstract}

\keywords{Phase Transition \and Separatrix \and Complex-Time Dynamical Systems \and Newton Flow \and 
Riemann Surface \and Riemann's $\xi$-function Riemann Hypothesis \and Differential Geometry \and Sensitivity}

\textbf{\textit{AMS subject classification}}
37F10, 34C37, 34C45, 37B30, 53B50

\section{Introduction}
Studying the topology of phase portraits of vector fields $\dot{x}=f(x), x\in \R^n$ might yield insight into properties of the generating function $f: \R^n \rightarrow \R^n$. If $n=2$ and $h:\C \ra \C \cong \R^2$ is an entire function,
\begin{equation}\label{eq:ode}
    \dot{z} = \dd[z]{t} = h(z), \quad z(0)=z_0 \in \C, t \in \R (\C)
\end{equation} has specific properties: Locally, the Cauchy-Riemann equation apply for real and imaginary part of $h$. Globally, the phase flow is usually divided into different stability regions by {\em separatrices} defining 'topological basins' around the roots of $h$. This is particularly interesting for flows of number{\hyp}theoretically
significant functions like the Riemann functions $h(z) \equiv \zeta(z)$ or $h(z) \equiv \xi(z)$ and the corresponding newton flows see e.g. \cite{Broughan2003, Broughan2003a,Neuberger2014,Schleich2018}
\begin{equation}\label{eq:newtonflow}
    z' = -\frac{h(z)}{h'(z)}, \quad z(0)=z_0 \in \C,
\end{equation}
with the roots of $h$ as asymptotically stable fixed points.

Broughan systematically analyzes and proves topological properties of holomorphic flows in a couple of interesting articles \cite{Broughan2003, Broughan2003a, Broughan2004, Broughan2005} that we refined and extended recently \cite{Kainz2023,Kainz2024,Kainz2024b}. Broughan refers to
the set composed of an equilibrium and its 'characteristic orbits' as the
\emph{neighborhood} of the corresponding equilibrium. Examples are a center
and all of its encircling periodic orbits or a stable node and its basin of attraction.  The boundaries of the neighborhoods of equilibria consist of separatrices, for which Broughan found an appropriate characterization which we address in the next section. 

As a
fundamental result it has been shown that holomorphic flows of type
(\ref{eq:ode}) do not allow limit cycles and sectors of the flow around
equilibria are confined by separatrices (see \cite{Broughan2003,Kainz2023}).  The recent
work of Schleich et al.~\cite{Schleich2018} suggests that the behavior of
$\xi$-Newton flow separatrices at infinity may contain cruciual information about the
location of the equilibria of such flows, e.g.\ the non{\hyp}trivial
zeros of $\xi(z)$.

A common feature of phase portraits is the occurrence of 'bundling behavior' (mutual repelling respectively) of trajectories near the separatrices, similar to the features of slow invariant manifolds (SIMs) and normally attracting invariant manifolds (NAIMs) (see e.g. \cite{Lebiedz2005b,Lebiedz2006a,Lebiedz2011a,Lebiedz2013a,Lebiedz2016,Heitel2019a,heiter2018towards,Lebiedz2022diff}). 

Here, we exploit experience and adopt tools from the latter context in order to study separatrix properties via time-propagation of perturbations $\Delta z$ based on sensitivity analysis. 
In case of holomorphic flows, there is an explicit solution formula $\Delta z(z(t))$ for the governing sensitivity equation (see \cite{Lebiedz2020}) evaluated along solution trajectories and linking the original ODE to the fields of Hamiltonian Systems (see section \Cref{sec:HamSens}) and Riemannian Geometry (see \Cref{sec:geomSens}). These links might provide toolboxes for extracting valuable global information about the phase portrait and the function $h$ and might also establish a link to chaotic systems which are discussed in the literature in the context of the Riemann hypothesis by various authors (see e.g. \cite{Berry1999}. In particular, separatrices play a crucial role in Hamiltonian chaos \cite{Zaslavsky2005h,Treschev2010i,Neishtadt1997s}.

\section{Separatrices and Complex Time}\label{sec:Seps&CompTime}

An appropriate mathematical
definition of separatrices is a controversial topic (cf. \cite{Broughan2003a}, the following \cref{def:sep} is reasonable and well suited for our particular purpose:
\begin{definition}[Separatrix \cite{Broughan2003a}]\label{def:sep}
    A trajectory $\gamma$ is a \emph{positive (negative) separatrix} for the
    holomorphic flow \eqref{eq:ode} if there is a point $z \in \gamma$ such that
    the maximum interval of existence of the path starting at $z$ and following
    the holomorphic flow \eqref{eq:ode} in positive (negative) time is finite. 
\end{definition}
This definition is not based on the separation of stability regions
in the phase space, but Broughan shows \cite{Broughan2003, Broughan2003a} that the
boundary of these stability regions consists of trajectories with finite
maximum interval of existence, and thus separatrices according to his definition.  

Since separatrices have finite maximum interval of existence, the vector field
$f$ cannot be bounded on separatrices. Otherwise, one could enlarge the
interval of existence. Therefore, a positive separatrix for entire functions
$f$ must reach out to infinity in phase space. This motivates the investigation
of the qualitative behavior of holomorphic flows at infinity which we analyze
in \cite{Heitel2019a}.

\begin{figure}[htbp]
    \centering
    \includegraphics[scale = 1.4]{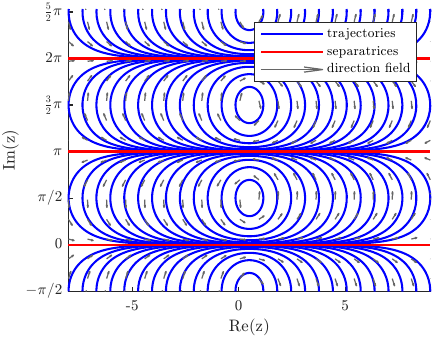}
    \caption{Phase portrait holomorphic flow $\dot{z}= \cosh(z-1/2)$}. 
    \label{fig:cosh}
\end{figure}

A holomorphic flow example is shown in \cref{fig:cosh}, the $\cosh$ flow is related to the flow of Riemann's $\xi$ function (see \cite{Broughan2005}, section 6). Here, the lines $\Im z = k\pi, k \in \Z$ are special trajectories (separatrices in the sense of Def. \ref{def:sep}
separating distinct regions of stability (neighborhood of centers), the sets $S_k := \left\{z  \in \C \, : \, \Im
z \in \big(k\pi, (k+1)\pi\big) \right\}$ for $k\in \Z$. Within such a region
$S_k$, all trajectories are periodic orbits around a particular equilibrium $z_k^* :=
\frac{1}{2} + \left(k + \frac{1}{2}\right)\pi$. 
Since topological properties change discontinuously when crossing separatrices, these can serve as models for phase transitions. 
A direct calculation for $\cosh$ shows that the red trajectories have a bounded interval of existence in both time directions, i.e. they are both positive \emph{and} negative separatrices in the sense of Def. \ref{def:sep}.

Recently we proposed complex time and analytic continuation of real{\hyp}analytic dynamical systems to study the properties of slow invariant manifolds (SIM), whose phase space similarities to separatrices have been pointed out above (in particular trajectory bundling and attraction/repulsion in positive, respectively negative time direction). It became obvious that spectral information can be gained by studying imaginary time trajectories of linear and nonlinear systems. Fourier transform even yields quantitative criteria characterizing SIM in several example systems (see \cite{Dietrich2019a}).

The solution trajectories of (\ref{eq:ode}) in complex time $t \in \C$ are embedded Riemann surfaces (ho\-lo\-mor\-phic curves). For the complex time ODE 
\[ \dd{t}z(t) = \dd{(\tau_1+\rmi \tau_2)}z(t) = h\big(z(t)\big), z_0 \in \C \]
with holomorphic function $h$, there is at least a local, unique solution $z(\cdot)$ representing a holomorphic curve \cite{Ilyashenko2008}. This solution satisfies the following partial differential equations:
\begin{equation}\label{eq:complextimeODE}
    \pp{\tau_1}z(t) = h\big(z(t)\big), \quad  \pp{\tau_2}z(t) = \rmi h\big(z(t)\big), z_0 \in \C
\end{equation}

\begin{figure}[ht]
    \begin{center}
	\begin{minipage}{0.49\textwidth}
    \includegraphics{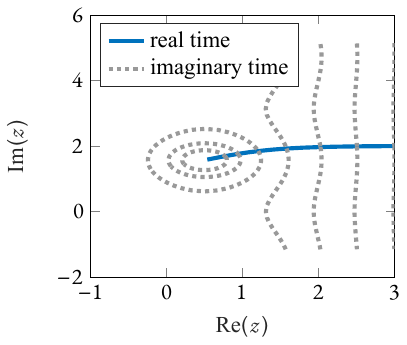}
	\end{minipage}		
	\begin{minipage}{0.49\textwidth}
     \includegraphics{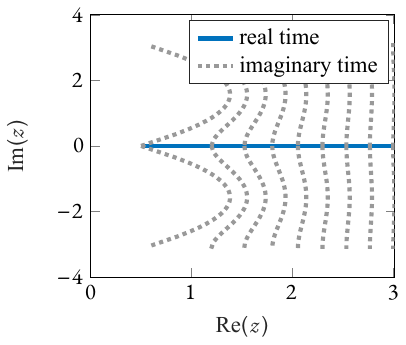}
	\end{minipage}
	\caption{Flow of $\cosh$ in real time (blue trajectories) and imaginary time (gray trajectories). Left figure: initial value of the real-time trajectory not on a separatrix. Right figure: initial value on a separatrix.}\label{fig:coshNewtonFlow2D}
    \end{center}
\end{figure}

In \Cref{fig:coshNewtonFlow2D} both real- and imaginary time trajectories for the $\cosh$ holomorphic flow are depicted. In the left plot, the blue line is some arbitrary non-separatrix real-time trajectory. On the right, the blue line is a separatrix. In both cases, the dotted imaginary time trajectories cross the real-time orthogonally. There are qualitative differences between the plots suggesting complex-time flows could be a helpful tool for separatrix phase transitions. We propse that this is an appropriate setting to study separatrices from a global point with tools from complex analysis, topology and algebra related to Riemann surfaces. 

\section{Newton Flow of Riemanns \texorpdfstring{$\xi$}{xi}-Function}\label{sec:NewtonFlow}
Newton flow lines $z(t)$ (solutions of (\ref{eq:newtonflow})) share the characteristic property of constant phase of the complex number $h(z(t))$ along solutions $z(t)$ and can be interpreted as gradient flow lines with respect to a Sobolev gradient defined by a Riemannian metric induced by the vector field \cite{Neuberger1999}.
Lines of constant modulus $|h(z(t)|$ are orthogonal to Newton flow lines. In a complex time view (see (\ref{eq:complextimeODE})), the latter correspond to imaginary time trajectories. Newton flows can be desingularized removing the singularities at zeros of $h'$ by rescaling time which results in the same flow lines with different time parameterization \cite{Benzinger1991,Jongen1988}.
The singularities of the Newton flow turn into hyperbolic fixed points (saddles). 

In particular, the inverted Newton flow of the Riemann $\zeta${\hyp}function occurs in the Riemann{\hyp}von{\hyp}Mangoldt explicit formula with $\zeta${\hyp}zeros $\rho_n$:

\begin{equation*}
    \psi_0(x) = \frac{1}{2\pi \rmi}\int\limits_{\sigma-\rmi \infty}^{\sigma +\rmi \infty}\left(-\frac{\zeta'(z)}{\zeta(z)}\right) \frac{x^z}{z} \dif z = x -\sum\limits_n \frac{x^{\rho_n}}{\rho_n} - \log(2\pi) -\frac{1}{2} \log(1-x^{-2}),
\end{equation*}
where
\begin{equation*}
    \psi(x) = \sum\limits_{p^k \le x} \log p \quad \mathrm{and} \quad \psi_0(x) = \frac{1}{2} \lim\limits_{h\rightarrow 0}\left(\psi(x+h)+\psi(x-h)\right)
\end{equation*}
with the second Chebyshev function $\psi$.

This was the original motivation for the authors to study a dynamical system based on $\zeta$ Newton flow and to introduce complex time. The Riemann{\hyp}von{\hyp}Mangoldt explicit formula contains the integral over the inverted $\zeta$ Newton flow in imaginary direction and relates this integral to a sum over $\zeta${\hyp}zeros. Thus the phase portrait of the $\zeta$ Newton flow, respectively \emph{symmetric} $\xi$ flow, should contain some global information on the $\xi$-zeros. 

We focus on the Newton flow of the symmetric (cf.\ \emph{functional equation}), entire $\xi${\hyp}function whose phase portrait has characteristic similarities to the complex cosh Newton flow.

\begin{equation*}
    \xi(z) := \frac{1}{2}z(z-1)\Gamma\left(\frac{z}{2}\right)\pi^{-z/2} \zeta(z).
\end{equation*}

Using the logarithmic derivative of the formula (see \cite[p.~47]{Edwards2001}) $\xi(z)=\xi(0) \prod\limits_{n}(1-\frac{z}{\rho_n})$ and separation of variables for the Newton Flow \eqref{eq:newtonflow} of $\xi(z)$ 
we get 
\begin{align*}
	& & -\frac{\xi'(z)}{\xi(z)} \dif z&=1 \cdot \dif t, & \left| \quad \int_{} \right.\\
	&\Leftrightarrow  & -\int\limits_{z_0}^{z(T)}\frac{\xi'(z)}{\xi(z)} \dif z &= \int\limits_{0}^{T} 1 \dif t, & z(0)=z_0\\
	&\Leftrightarrow & -\int\limits_{z_0}^{z(T)} \sum\limits_{n} \frac{1}{z-\rho_n}\dif z &= T &\\
	&\Leftrightarrow & -\sum\limits_{n}\int\limits_{z_0}^{z(T)} \frac{1}{z-\rho_n}\dif z &= T &\\
	&\Leftrightarrow & \sum\limits_{n}\log(z_0-\rho_n) - \sum\limits_{n} \log(z(T)-\rho_n) &\equiv T & (\mathrm{mod}\; 2\pi \rmi)\\
	&\Leftrightarrow & \sum\limits_{n} \log \frac{z(T)-\rho_n}{z_0-\rho_n} &\equiv -T & (\mathrm{mod}\; 2\pi \rmi) \\
	&\Leftrightarrow & \exp\left(\sum\limits_{n} \log \frac{z(T)-\rho_n}{z_0-\rho_n}\right) & \equiv \rme^{-T} & (\mathrm{mod}\; 2\pi \rmi) \\
	&\Leftrightarrow & \prod\limits_{n} \frac{z(T)-\rho_n}{z_0-\rho_n} &\equiv \rme^{-T} & (\mathrm{mod}\; 2\pi \rmi)
\end{align*}
When the integration path encloses zeros of $\xi$, the residue theorem has to be applied which brings addition of $k2\pi \rmi$. 
Finite approximation of the infinite product of $\xi${\hyp}zeros allows the definition of a polynomial
\begin{equation*}
    P_m(z;T,z_0):= \prod\limits_{n=1}^{m} \frac{z-\rho_n}{z_0-\rho_n} - \rme^{-T}.
\end{equation*}
An alternative derivation of $P_m(z;T,z_0)=$ is to combine the product formula
\begin{equation*}
    \xi(z) = \xi(z_0)\prod\limits_{n} \frac{z-\rho_n}{z_0-\rho_n}
\end{equation*}
with a line of constant phase as solution trajectory of the Newton flow of $\xi$
\begin{equation*}
    \xi\big(z(T)\big) = \xi(z_0)\rme^{-T}.
\end{equation*}
This also yields 
\begin{equation*}
    \rme^{-T} = \prod\limits_n \frac{z(T)-\rho_n}{z_0-\rho_n},
\end{equation*}
with approximating polynomials $P_m\big(z;T,z_0\big)$.

Contributions to the Newton flow of those terms corresponding to $\xi$ zeros \enquote{far away} from the initial value should be small for $|T|$ not too large.
The complex value of the solution trajectory at the specific time $T \in \C$ starting at $z(0)=z_0$ can then by approximated by the polynomial equation
\begin{equation}\label{eq:polynomapproxXi}
    P_m(z;T;z_0) = 0,
\end{equation}
which defines a complex algebraic curve.
The approximated Newton flow vector field is in this case a rational function and can be transformed to a polynomial by rescaling time as in \cite{Benzinger1991}. 
Figure~\ref{fig:xiApprox4} illustrates that with increasing number of complex conjugated $\xi$-zeros the polynomial flow solution manifolds comes closer to the $\xi$ Newton flow phase portrait in \cref{fig:xiNewtonFlow}. An approximation theoretical analysis is subject to future research. 
Figures~\ref{fig:xiApprox4} and \ref{fig:xiApprox40} were created with \textsc{MATLAB}. The first 100 $\xi${\hyp}zeros of Odlyzko \cite{Odlyzko} were used for numerical calculations. Equation \eqref{eq:polynomapproxXi} is solved for discrete time points $T$ for each initial value $z_0$ on a discretized grid of the compact set $[-7,8] \times [-1,30]\rmi$. \cref{fig:xiNewtonFlow} shows the phase portrait of the Newton flow of $\xi$. It was calculated using Mathematica.

\begin{figure}
    \begin{minipage}{0.49\textwidth}
	\centering
 \includegraphics[scale = 1.3]{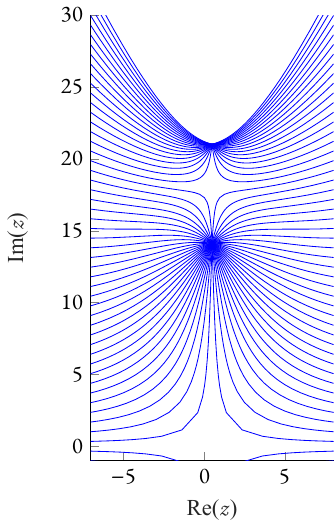}
	\caption{Appro\-xi\-ma\-tion of the $\xi$ Newton flow using \eqref{eq:polynomapproxXi}, $m=4$ (left) and $m=40$ (right) zeroes of $\xi$ (complex conjugate paits).}\label{fig:xiApprox4}
    \end{minipage}
    \hfill
    \begin{minipage}{0.49\textwidth}
	\centering
 \includegraphics[scale = 1.3]{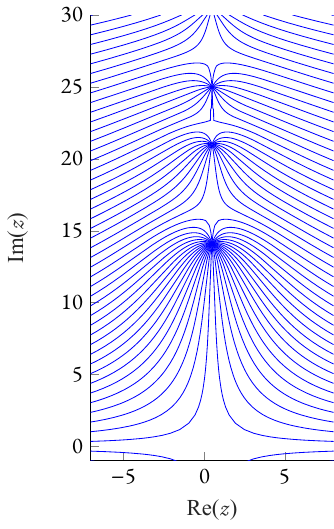}\label{fig:xiApprox40}
    \end{minipage}
\end{figure}   
\begin{figure}
    \centering
    \includegraphics[width=0.4\linewidth]{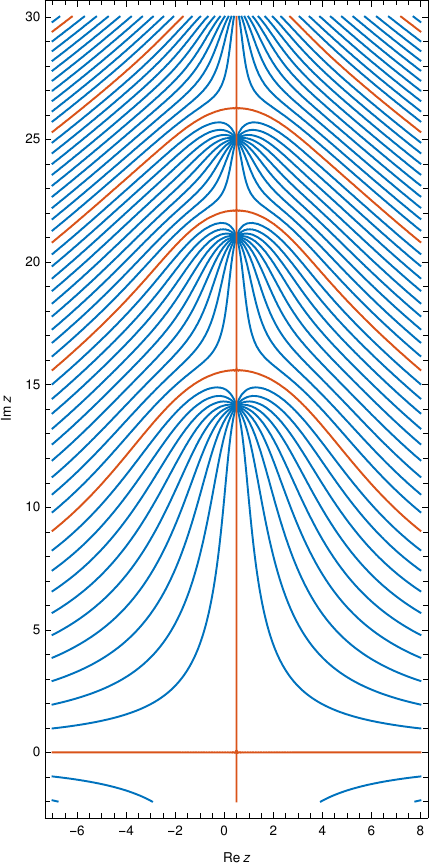}
    \caption{Newton flow of $\xi$. Trajectories are plotted by calculating lines of constant phase. Orange lines are separatrices.}\label{fig:xiNewtonFlow}
\end{figure}   

If $z(\cdot)$ is a periodic orbit of the Newton flow of $\xi$ in complex time with \enquote{complex period} $T \in \C$ a necessary condition for $z(T)=z_0 \in \C$ is
\begin{equation*}
    \rme^{-T} = \prod\limits_{n} \frac{z(T)-\rho_n}{z_0-\rho_n} = 1.
\end{equation*}
Therefore, $T$ must be a multiple of $2\pi \rmi$, i.e.\ $T = 2\pi k \rmi, k \in \Z$. 

We think that such polynomial approximations of the $\xi$ Newton flow based on an explicit formula for the $\xi${\hyp}zeros might allow to build a bridge into the world of algebraic geometry and corresponding approaches to the Riemann $\xi${\hyp}function and maybe even allow a view on the Riemann hypothesis. There might also be a relation to the Hilbert{\hyp}Polya conjecture associating the $\xi${\hyp}zeros with eigenvalues of some (self{\hyp}adjoint) operator. Note that the Riemann dynamics, a hypothetic classical chaotic dynamical systems which should lead to the Hilbert{\hyp}Polya operator via quantization, is supposed to have complex periodic orbits with periods $T_k=k\pi \rmi, k \in \N$ \cite{Berry1999,Sierra11}, which is in our case true for the Hamiltonian $H(z,p):=\xi(\frac{z}{2})p$. For the Hamiltonian dynamical system constructed from this type of Hamiltonian being closely related to the Newton flow see subsections \ref{subsec:HamSyst} and \ref{subsec:Xi_newt}.
We speculate that in our setting an interesting operator might be constructed via the Newton flow map of $\xi$ related to a Hamiltonian system (see next subsection \ref{subsec:HamSyst}) mapping an initial function represented by imaginary time Newton flow solutions along positive real time (the modulus $|\xi(z)|$ is a conserved quantity along imaginary{\hyp}time trajectories of the Newton flow and the phase of the complex number $\xi(z)$ along real{\hyp}time trajectories!). In the physical literature a specific formal relation between statistical mechanics and quantum mechanics is referred to as Wick rotation \cite{Wick54}, sometimes called a mysterious connection: when replacing time in the evolution operator $\rme^{-\frac{\rmi}{\hbar}Ht}$ of some Hamiltonian $H$ (from time{\hyp}dependent Schr\"odinger equation) by negative imaginary time $t \rightarrow -\rmi t$ and then $\frac{t}{\hbar}$ by $\frac{1}{T}$ one ends up with the time{\hyp}independent state density operator $\rme^{-\frac{H}{T}}$ from equilibrium statistical mechanics. We propose that this view might be of particular relevance in our context. Looking at \cref{fig:xiApproxImag20} it becomes obvious how cyclic imaginary time trajectories are transported in real time direction along the Newton flow. When crossing in this time evolution a zero of the derivative $P_m'(z;T,z_0)$ on a separatrix, the Riemann solution surface in complex time bifurcates and in imaginary time direction two cyclic branches occur, one enclosing a single zero and the other enclosing all remaining zeros. This is reminiscent of a phase transition. 

In addition to the obvious algebraic issue to study the interesting distribution of zeros of the polynomials $P_m(z;T,z_0)$ (red dots in \cref{fig:xiApproxImag20}) as a function of the polynomial degree and their behavior at Riemann surface branching points, a more general relation to algebraic geometry is based on the fact that compact Riemann surfaces $\mathcal{R}$ are biholomorphically equivalent to complex projective algebraic curves. 


\begin{figure}
    \begin{minipage}{0.47\textwidth}
	\centering
	\includegraphics[width=0.9\linewidth]{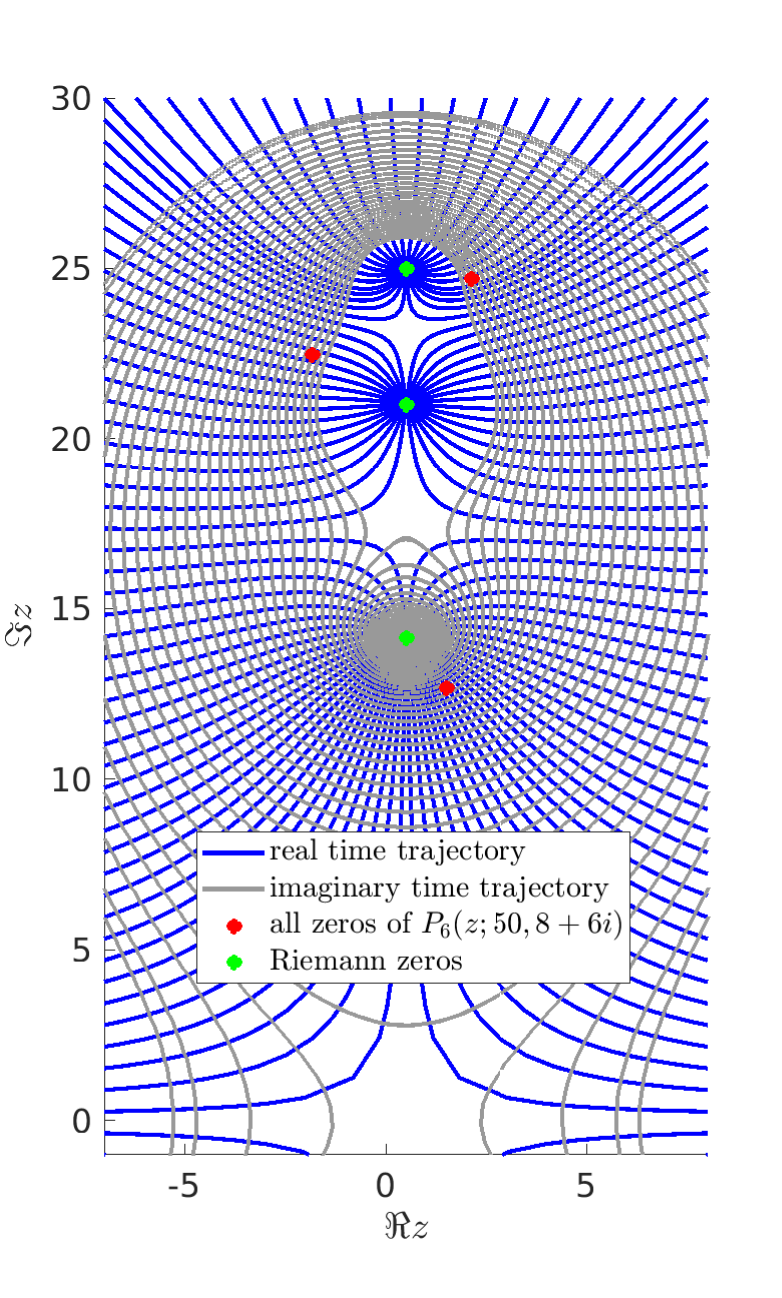}
    \end{minipage}
    \hfill
    \begin{minipage}{0.47\textwidth}
	\centering
	\includegraphics[width=0.9\linewidth]{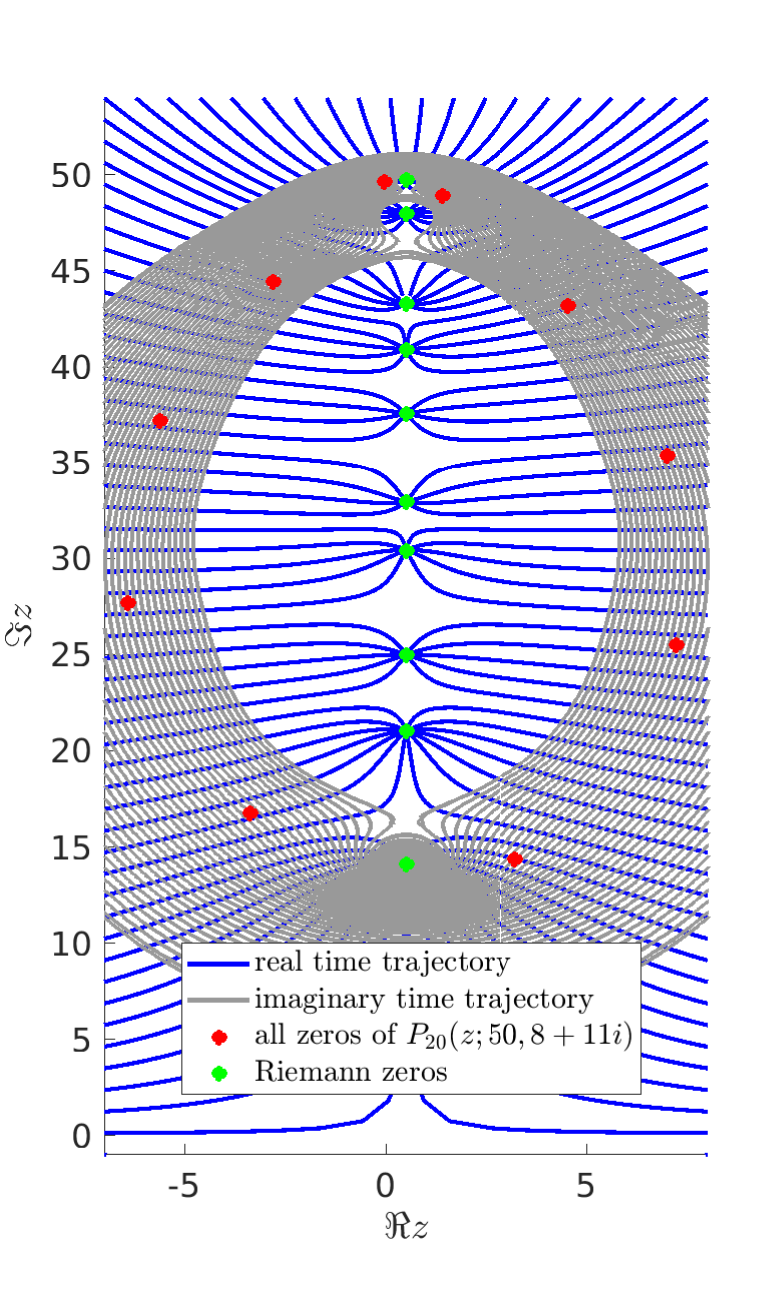}
	\caption{Left: Real and imaginary time $\xi$ Newton flow solution ap\-prox\-i\-mat\-ed by \eqref{eq:polynomapproxXi} using $m=6$ zeros of $\xi$ (first three zeros and their complex conjugates). Right: Real and imaginary time $\xi$ Newton flow solution ap\-prox\-i\-mat\-ed by \eqref{eq:polynomapproxXi} using $m=20$ zeros of $\xi$ (first 10 zeros and their complex conjugates).}\label{fig:xiApproxImag20}
    \end{minipage}
\end{figure}   

\subsection{Hamiltonian System}\label{subsec:HamSyst}
We propose a Hamiltonian system $z,p\in \C, H:\C^2 \rightarrow \C, H:=\xi(z)p, t \in \R$ which connects  the holomorphic $\xi$-flow with both its variational flow and Newton flow.  
\begin{eqnarray}\label{hamiltonsyst}
\dot{z}=\frac{\d z}{\d t}&=&\frac{\partial H}{\partial p}=\xi(z), \quad z(0)=z_0 \nonumber \\
\dot{p}=\frac{\d p}{\d t}&=&-\frac{\partial H}{\partial z}=-\xi'(z)p, \quad p(0)=p_0. 
\end{eqnarray}
Phase space formulation with momentum $p$ leads to 
\begin{equation}\label{phasespaceODE}
\frac{\d z}{\d p}=\frac{\dot{z}}{\dot{p}}=-\frac{\xi(z)}{\xi'(z)}\cdot\frac{1}{p} 
\Leftrightarrow
\frac{\xi'(z)}{\xi(z)} \d z =- \frac{1}{p} \d p.
\end{equation}
\noindent
Separation of variables and use of the logarithmic derivative $(\ln \xi(z))'=\frac{\xi'(z)}{\xi(z)}$ of the $\xi$-product formula 
\begin{equation}\label{xiprodformel}
\xi(z)=\xi(0) \prod\limits_{n}\left(1-\frac{z}{\rho_n}\right)=\xi(0)\prod\limits_{n}\left(\frac{\rho_n-z}{\rho_n}\right)
\end{equation} 
with the $\xi$-zeros $\rho_n\in \C$ yields 
\[ \ln \xi(z)=\ln \xi(0)+ \sum_{n\in\N}\ln \left(\rho_n-z\right)- \sum_{n\in\N}\ln \rho_n \]
and $(\ln \xi(z))'= \sum_{n\in\N} \frac{1}{z-\rho_n}$. For $z(0)=z_0, p(0)=p_0$ integration of the Hamiltonian system (\ref{phasespaceODE}) leads to
\begin{equation}
\label{xiapprox}
    \begin{array}{rrlr}
	& \frac{\xi'(z)}{\xi(z)} \d  z&=-\frac{1}{p} \d  p  &\\
	\Leftrightarrow  & \int\limits_{z_0}^{z}\frac{\xi'(s)}{\xi(s)} \d  s &= -\int\limits_{p_0}^{p} \frac{1}{s} \d  s & \\
	\Leftrightarrow & \int\limits_{z_0}^{z} \sum\limits_{n} \frac{1}{s-\rho_n}\d  z &= \ln \frac{p_0}{p} &\\
	\Leftrightarrow & \sum\limits_{n}\int\limits_{z_0}^{z} \frac{1}{s-\rho_n}\d  z &=  \ln \frac{p_0}{p}  &\\
	\Leftrightarrow & \sum\limits_{n}\ln(z-\rho_n) - \sum\limits_{n} \ln(z_0-\rho_n) &=  \ln \frac{p_0}{p} &\\
	\Leftrightarrow & \sum\limits_{n} \ln \frac{z-\rho_n}{z_0-\rho_n} &=  \ln \frac{p_0}{p} & \\
	\Leftrightarrow & \exp\left(\sum\limits_{n} \ln \frac{z-\rho_n}{z_0-\rho_n}\right) & =   \frac{p_0}{p} \quad \mathrm{(mod } \ 2\pi i)&\\
	\Leftrightarrow & \prod\limits_{n} \frac{z-\rho_n}{z_0-\rho_n} &= \frac{p_0}{p} \quad \mathrm{(mod } \ 2\pi i)& 
    \end{array}
\end{equation}

\subsection{Algebraic Geometry Viewpoint}
\noindent 
Based on (\ref{xiapprox}) we define a polynomial $P_m \in \C[z,p],m \in \N$
\begin{eqnarray}
P_m (z,p,z_0,p_0):= \nonumber \\ 
p \prod\limits_{n=-m,n\ne 0}^m (z-\rho_n) - p_0\prod\limits_{n=-m,n\ne 0}^m (z_0-\rho_n)
\end{eqnarray}
with the product from $-m$ to $m$ referring to $m$ zeros together with their $m$ complex conjugated $\xi$-zeros.
We propose to consider the polynomial $P_m$ in complex projective space, which is for fixed initial values $(z_0,p_0) \in \C^2$ the projective plane $P^2_{\C}$ and the zero set $P_m(z,p)=0$ describes a plane algebraic curve on the Riemann sphere. Due to convergence of the $\xi$-product formula (\ref{xiprodformel})  (see \cite{Edwards2001} chap. 2 and \cite{Hadamard1893,VonMangoldt1895}), for $m\rightarrow \infty$ this algebraic set converges towards to solution manifold of the Hamiltonian flow (\ref{hamiltonsyst}).\\
The analytic and algebraic study of this manifold and its topology and symmetries might provide further insight into the properties of the $\xi$-function and the role of the Riemann zeros and their location. The work of Schleich et al. points out a significant role of the asymptotics of particular $\xi$-Newton flow lines, the separatrices (solution trajectories with a finite interval of existence, see \cite{Broughan2003a}), for the location of the Riemann zeros. In \cite{Heitel2019a} we suggest Poincar\'{e} sphere compactification of the dynamical system in order to study separatrices 'at infinity' and prove a result for the relation between fixed points and separatrices. We propose that the algebraic geometry viewpoint suggested here offers tools to characterize the separatrices as algebraic curves with particular geometric properties.

\subsection{Relation to \texorpdfstring{$\xi$}{xi}-Newton Flow}\label{subsec:Xi_newt}
\noindent
For momentum $p=p_0 e^T, T\in \C$ this result is in full agreement with the results from \cite{Heitel2019a} for the $\xi$-Newton flow in complex time. The solution of $\dot{p}= -\xi'(z)p$ is $p(t)=p_0e^{-\int_0^t \xi'(z(\tau)) \d \tau}=p_0e^{\xi(z_0)-\xi(z)}$, i.e. $T= \xi(z_0)-\xi(z) +2 \pi k i, t\in \mathbb{R}, T \in \C, k\in \mathbb{Z}$. $z(T)$ can then be interpreted as the solution of a Newton-flow equation in complex time:
\begin{eqnarray}
\frac{\d z}{\d T} &=& \frac{\d z}{\d p}  \cdot \frac{\d p}{\d T}  = -\frac{\xi(z)}{\xi^\prime(z)} \nonumber \\
\frac{\d  T}{\d t} &=& \frac{\d  T}{\d p} \cdot \frac{\d  p}{\d t} =  \frac{1}{p} \cdot (-\xi^{\prime}(z)p) = - \xi^{\prime}(z). 
\end{eqnarray}
With substitution and logarithmic derivative we get 
\begin{eqnarray}
 \int\limits_0^t \xi^\prime(z(\tau))\d \tau &=& \int\limits_{z_0}^z \frac{\xi^\prime(s)}{\xi(s)}\d s  =
\ln(\xi(z))-\ln(\xi(z_0)) \nonumber \\
&=& \ln \left(\frac{\xi(z)}{\xi(z_0)} \right),
\end{eqnarray}
and for the time $T = -\ln(\xi(z(t)) + \ln(\xi(z_0)) + 2\pi k i$. The function $T(t)$ solves the differential equation
\begin{eqnarray}
\frac{\d T}{\d t} &=& -\frac{\d }{\d t} \ln (\xi(z(t)) = - \frac{1}{\xi(z(t))}\xi^\prime(z((t))\xi(z(t)) \nonumber \\
&=&  - \xi^\prime(z(t)).
\end{eqnarray}
This is obviously a nonlinear (differential) reparameterization of time $t\in \R \rightarrow T\in \C$ via $\xi'$ along solutions of the $\xi$-flow.

\subsection{Hamiltonian Variational Equation and Stability}\label{subsec:HamStability}
\noindent
The propagation of perturbations of initial values $\Delta z_0, \Delta p_0$ along closed Hamiltonian orbits is described to first order by the variational differential equation
\begin{eqnarray}
 \frac{\d}{\d t} \Delta z&=&\xi'(z) \Delta z, \Delta z(0)=\Delta z_0 \label{vardgl1} \\
 \frac{\d}{\d t} \Delta p&=&-\xi''(z)p \Delta z - \xi'(z) \Delta p, \Delta p(0)=\Delta p_0 
\end{eqnarray} 
Over the field $\C$ eq. (\ref{vardgl1}) and the momentum equation of (\ref{hamiltonsyst}) are 1-dim. and can be explicitly solved to
\begin{equation}\label{eq:XiImpsol}
\Delta z= \frac{\xi(z)}{\xi(z_0)}\Delta z_0, \quad p= \frac{\xi(z_0)}{\xi(z)}p_0, \quad p\Delta z=  p_0 \Delta z_0
\end{equation} 
and so the $\Delta p$ equation is
\begin{equation}\label{eq:DqXiODE}
 \frac{\d}{\d t} \Delta p=-\xi''(z) \Delta z_0 p_0  -\xi'(z) \Delta p
\end{equation} 
Variation of constant $c(t)$ with ansatz $\Delta p :=c(t) \xi^{-1}(z)$ leads to the solution
\begin{equation}\label{eq:vardglsol}
\Delta p =  \frac{p_0\Delta z_0(\xi'(z_0)-\xi'(z))+\xi(z_0)\Delta p_0}{\xi(z)}
\end{equation} 
which turns out to be a summation formula using the $\xi$ product representation (\ref{xiprodformel}) and its logarithmic derivative:\begin{equation}\label{traceformula}
\Delta p =  p_0 \Delta z_0 \left( \frac{\xi'(z_0)}{\xi(z)} - \sum\limits_{n} \frac{1}{z-\rho_n}\right) + \frac{\xi(z_0)}{\xi(z)} \Delta p_0,
\end{equation} 
The perturbation $\Delta p$ propagates with reference to the Riemann zeros. Eq. (\ref{traceformula}) is a sensitivity equation allowing local stability analysis for Hamiltonian orbits and contains a 'spectral summation' reminiscent of a classical trace formula \cite{Cvitanovic1999}. 
For the complex-valued flow map differential matrix $M\in\C^{2\times 2}$ describing the evolution $(\Delta z, \Delta p)^{\top}=M(\Delta z_0, \Delta p_0)^{\top}$  of an initial perturbation along a given Hamiltonian orbit $(z(t),p(t))\in\C^2$ we get 
\begin{equation}\label{diffpoincaremap}
 M=
  \left( {\begin{array}{cc}
    \frac{\xi(z)}{\xi(z_0)} &  0\\
  p_0\frac{\xi'(z_0)}{\xi(z)}- \sum\limits_{n} \frac{p_0}{z-\rho_n} & \frac{\xi(z_0)}{\xi(z)}  \\
  \end{array} } \right)
\end{equation}
with a coupling constant $k_{z,p}:= p_0\frac{\xi'(z_0)}{\xi(z)}-\sum\limits_{n} \frac{p_0}{z-\rho_n}$ between $z$- and $p$-space. 
$M$ takes a particularly simple form for initial values in the set $\{ z_0 \in \C:\xi'(z_0)=0\}$, which are on separatrices of the $\xi$-Newton flow \cite{Neuberger2014,Neuberger2015}. 

\subsection{Action as Periodic Orbit Transit Time}
\noindent
For the action along a periodic Hamiltonian orbit we compute $S_p(E)=\oint p \ \d z$ with constant flow-invariant Hamiltonian (energy) $E=H(p,z)=H(p_0,z_0)$:
\begin{equation}
S_p(E)= \oint p(t) \xi(z(t)) \d t=\oint H(z,p) \d t=H(z_0,p_0) \cdot t^*
\end{equation}
with closed orbit period $t^*$ of the $\xi$-flow (see \cite{Broughan2005}). The energy-time duality formula (2.11) $t^*_p=\frac{\partial S_p}{\partial E}$ from \cite{Berry1999} gives in our case the period $t^*=t^*_p=\frac{2\pi i}{\xi'(\rho_n)}$ with $n\in \N$ depending on $z_0$ fixing the periodic orbit. If the $\xi$-zero is simple, it is a center-type fixed point of the dynamical system \cite{Broughan2003}.
On a fixed energy level set invariant under the Hamiltonian flow the action is quantized. The quantization is determined by the derivatives $\xi'(\rho_0)$ at the Riemann zeros $\rho_n$ defining by elementary complex calculus \cite{Broughan2003,Broughan2005} the closed orbit period which is invariant under homotopy of periodic orbits. Broughan and Barnett already pointed out a potential role of $\xi$-flow closed orbit periods for the Hilbert-P\'{o}lya approach and numerically investigated a linear-scaling law for the periods with increasing imaginary part of the critical zeros with an upper bound for the periods \cite{Broughan2006lin} under the Gonek conjecture assuming a lower bound for the $\zeta$-derivative $\zeta'(\rho_n)$ evaluated at the Riemann zeros.

\section{Sensitivity of Polynomial Flows and Hamiltonian Structure}\label{sec:HamSens}

The Hamiltonian system in \Cref{subsec:HamSyst} is constructed by taking the Riemann $\xi$-function and multiplying it with a complex-valued variable $p$. We create an entire class of Hamiltonian functions $H(z,p) := h(z)p$ by replacing the $\xi$-function by a general holomorhic function $h$.

According to the considerations of \Cref{sec:NewtonFlow}, evident candidates to generate such Hamiltonians are the approximating polynomials based on the $\xi$-product formula 
\begin{equation}\label{eq:XiApprPoly}
    z \mapsto h_{2m}(z):= \xi(z_0) \prod_{n=-m, n \ne 0}^m \left(\frac{z-\rho_n}{\rho_n} \right)
\end{equation}
symmetrically (with respect to the real axis) containing the $m$ pairs of roots $\rho_n$ of $\xi$ on the critical axis with the smallest imaginary parts. We call them $\xi$-\textit{approximating polynomials} of (even) degree $2m$. In \Cref{fig:xiApprox4}, we have exemplarily shown the phase-space portraits of newton-flows of $h_4$ and $h_{40}$.

In the following sections, we point out our arguments for these polynomials ($m=4$, degree $=8$). For numerical reasons, we multiply these functions by a suitable constant $\alpha \in \R^{>0}$. The following solution formulas imply that this operation does not change the phase-space portrait or the shape of the resulting Riemann surfaces, only the velocity of the flow. 



Given a holomorphic function $h$ and its Hamiltonian function $H(z,p) = h(z)p$, the corresponding Hamiltonian flow is
\begin{equation}\label{eq:HamFlow}
    \Dot{z}=h(z),\qquad \Dot{p} = -h'(z)p.
\end{equation}
The dynamics of the state variable $z$ is modeled by the holomorphic flow of $h$. As in classical Hamiltonian mechanics we call $p$ the momentum variable. We show how this system connects the holomorphic flow and its sensitivity by this momentum $p$. 

Since the Hamiltonian function $h(z)p$ is constant along trajectories, we can directly deduce the unique solution formula for the momentum $p$ with respect to $z$ 
\begin{equation} \label{eq:ImpSol}
    p = \frac{h(z_0)}{h(z)} p_0
\end{equation}
for a given pair of initial values $(z_0,p_0)$ (compare to \cref{eq:XiImpsol} for the $\xi$-function). Naturally, $p(z;z_0,p_0)$ also solves \cref{eq:HamFlow} as a function of the complex time $t$.  


\begin{figure}[hbtp]
    \centering
    \includegraphics[scale=0.35]{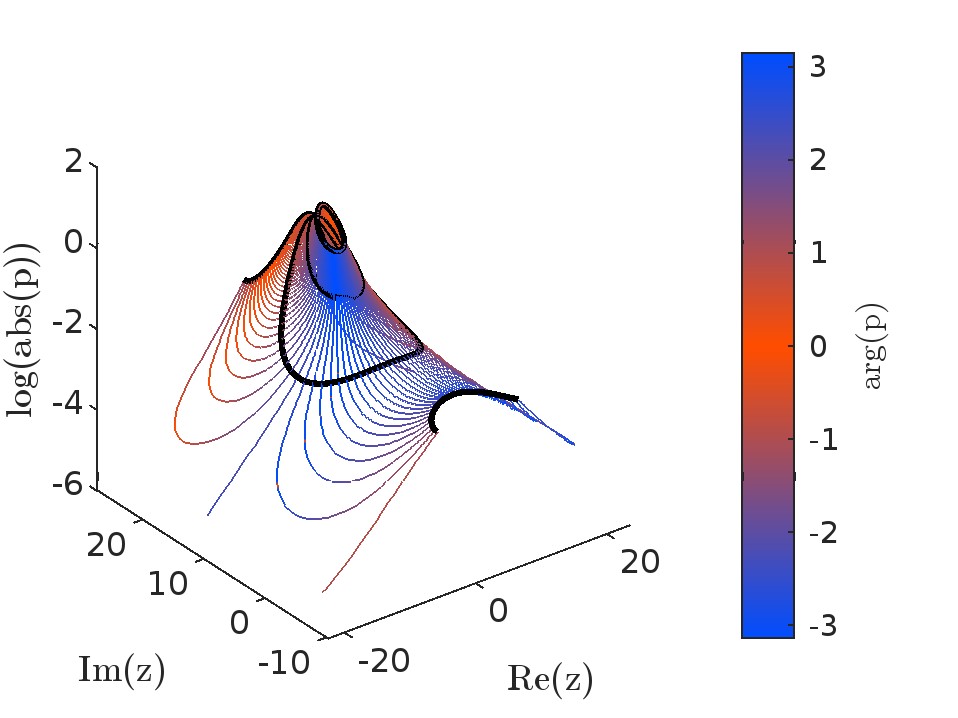}
    \caption{Solution surface of Hamiltonian system for the polynomial $h_8$. Colored lines are imaginary-time trajectories. Real-time trajectories are represented by black lines.}
    \label{fig:qp_surf}
\end{figure}

In \Cref{fig:qp_surf}, we show a solution of the Hamiltonian system for $h_{8}$ - the $\xi$-approximating polynomial of degree $8$. Here, each real-time (non-separatrix) solution trajectory of the state variable $z(t)$ is a periodic orbit around a single simple root of $h_8$. Visualizing \emph{one} solution of the Hamiltonian system, we use one pair on initial values $(z_0,p_0)$, create an equidistant  grid of real- and imaginary time lines and solve the discretized system numerically.

The solution of the momentum as a function depending on $z$ is divided into its absolute value (vertical axis) and its argument. The graph of this function has the structure of a Riemannian surface embedded in $\C^2$. On each surface, the Hamiltonian function $H$ is constant.

\subsection*{Sensitivity}
The propagation of perturbations of initial values $\Delta z_0$ along trajectories is described to first order by the variational differential equation
\begin{equation}\label{eq:Sens}
    \dd[\Delta z]{t} =h'(z) \Delta z, \quad \Delta z(0)=\Delta z_0 
\end{equation}
 Analogous to \cref{eq:ImpSol}, we receive the solution
\begin{equation} \label{eq:SensSol}
    \Delta z = \frac{h(z)}{h(z_0)} \Delta z_0 = h(z) \frac{\Delta z_0}{h(z_0)}
\end{equation}
for given initial values $z_0,p_0 \in \C$ (compare to \cref{eq:XiImpsol}). We see that the solution $\Delta z(z)$ behaves like the function $h(z)$ itself. In contrast, the momentum $p$ behaves like $1/h(z)$. This 'inverse connection' between $p$ and $\Delta z$ is also reflected in the fact that the ODEs \cref{eq:HamFlow} and \cref{eq:Sens} exactly match up to the reverse sign. Moreover, we see that
\[
\Delta z \cdot p = \frac{h(z)}{h(z_0)}\Delta z_0 \frac{h(z_0)}{h(z)} p_0 = \Delta z_0 \cdot p_0.
\]
A direct calculation shows that the function mapping $(z,p)$ to $\Delta z \cdot p$ is equivalent to the proposed Hamilton function $H$ (as long as $h(z)$ and $h(z_0)$ do not vanish). 

\begin{figure}[hbtp]
    \centering
    \includegraphics[scale=0.35]{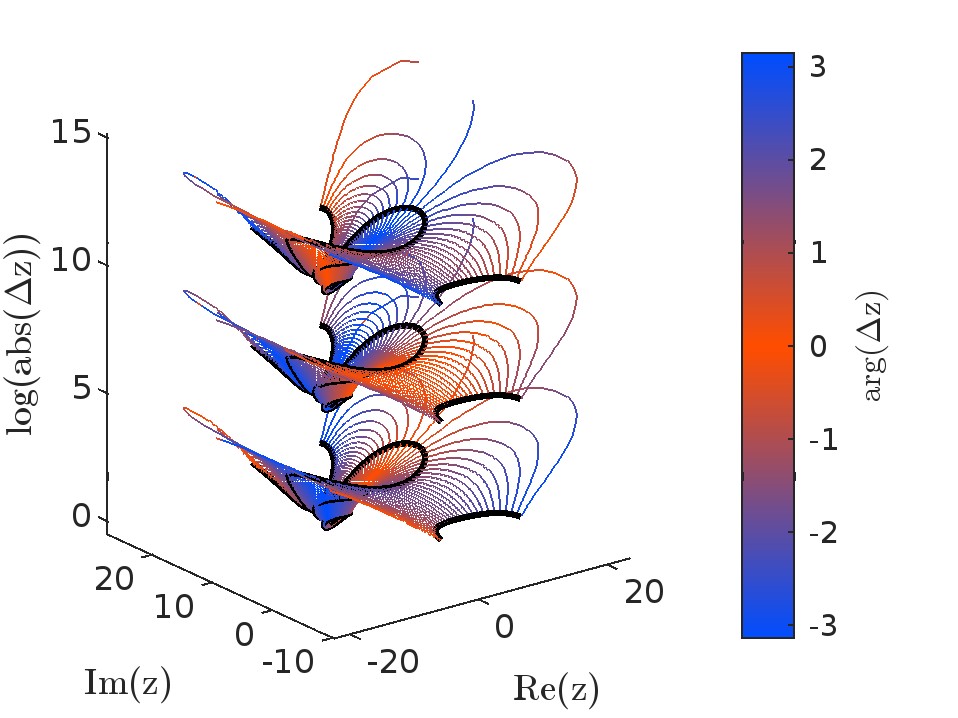}
    \caption{Solution surfaces of the sensitivity equation \cref{eq:Sens} for different initial values for the same polynomial $h_8$ as in \Cref{fig:qp_surf} for different initial values. Black- and colored lines represent real- and imaginary time trajectories respectively.}
    \label{fig:qdq_surf}
\end{figure}

In \Cref{fig:qdq_surf} we exemplify, how the sensitivity $\Delta z$ behaves along complex-time solutions of a holomorphic flow and visualize its dependence on the chosen initial value. For that we chose three different pairs of initial values $(z_0, \Delta z_0)$ and solve for the same complex-time numerical grid as in \Cref{fig:qp_surf}.  

The linear dependence of $\Delta z$ on $\Delta z_0$ is reflected in the fact that the resulting surfaces the same shape resulting from the scaling of the vertical axis and the properties of the logarithm implying
\[
\log(\text{abs}(\Delta z)) = \log(\text{abs}(\Delta z_0))+ \log\left(\text{abs}\left(\dfrac{h(z)}{h(z_0)}\right) \right).
\]

The 'inverse connection' to $p$ is also reflected by comparing this plot to \cref{fig:qp_surf}, where the surfaces appear to be 'upside down'. This is also due inverse nature of $p$ and $\Delta z$ and the way we scaled the vertical axis.

\subsection*{Connection between holomorphic flow and newton flow}
 
One central propertie of the newton flow (see \cref{eq:newtonflow}) is the fact that its solutions in real time conserve the phase of $h$. A direct calculation (for the complex time variable $t$) shows
\[
    \dd[h(z(t))]{t} = \pp[h(z)]{z} \dd[z]{t} = h'(t) \left( \frac{-h(z(t)}{h'(z(t)} \right) = -h(z(t)).
\]
This means that if we interpret $\Tilde{h} :=h \circ z$ as a function of $t$, it satisfies the ODE $\Tilde{h}' = -\Tilde{h}$. For a given initial point $z_0 = z(t_0)$ we receive the solution formula
\begin{equation}\label{eq:NewtonProp}
    h(z(t)) = h(z_0) e^{-t}, \quad t \in \C
\end{equation}
This is especially interesting when extracting solutions in real- and imaginary time:
\begin{remark} \label{rem:SensNewt}
    The real-time solutions of the newton flow conserve the argument of $h$ and therefore of the sensitivity $\Delta z$ of the holomoprhic flow. The imaginary-time solution are contour lines of $|h|$ - and therefore also of the absolute value of the sensitivity of the holomorphic flow.
\end{remark}
\begin{remark} \label{rem:ImpNewt}
    The real-time solutions of the newton flow conserve the argument of $1/h$ and therefore of the solution of the momentum $p$ in \cref{eq:ImpSol}. The imaginary-time solution are contour lines of $|1/h|$ - and therefore also of the absolute value $p$.
\end{remark}

Understanding the connection between a holomorphic flow and its newton flow might be valuable in order to study the underlying function $h$ itself.
\Cref{subsec:Xi_newt} establishes a link by calculating the complex derivative $\d t / \d T$ - where $t$ and $T$ are the complex time variables reserved for the holomorphic- and newton flow respectively - for the $\xi$-function. These arguments can be applied to any holomorphic function $h$, yielding
$\d t / \d T = -h'(z)$.
\Cref{rem:SensNewt,rem:ImpNewt} formulate how $\Delta z$ and $p$ are connected in both flows in an illustrative way based on the structures of the phase-space portraits: In particular, the conservation properties of real- and imaginary newton-flow with respect to $h$ transfer to the momentum and the sensitivity of the holomorphic flow.


\section{Holomorphic Sensitivity in a Differential Geometry Setting}\label{sec:geomSens}
In this subsection, we introduce a coordinate-in dependent geometric viewpoint connecting the holomoprhic flow, its sensitivities and the proposed Hamiltonian system from the previous Section. The solutions of the sensitivity equation describe dynamics in the tangent bundle $T_{z(t)} \C$ or - alternatively formulated - in $T_{\textbf{z}(t)}\R^2$, when splitting $z$ into real- and imaginary part comprised in the vector $\textbf{z}(t)\in \R^2$. We translate this dynamic to a geometric property within a suitable setting, based on the Hamiltonian function $H(z,p) = h(z)p$. For this, we apply a Legendre transformation to the function
\[
H_{\text{abs}}: \R^4 \to \R, \quad (z_1,z_2,p_1,p_2)\mapsto |h(z_1+iz_2)(p_1+ip_2)|^2
\]
which is evidently constant on the solution surfaces of the Hamiltonian system interpreted in $\R^4$ ($H_{abs}$ contains partial information of $H$).
Applying a Legendre transformation to $H_{\text{abs}}$ according to $p_1$ and $p_2$ results in the Lagrangian function
\begin{equation}\label{eq:PMLagrange}
L: \R^2 \times \R^2 \to \R, \quad
(z_1,z_2, \tilde{z}_1,\tilde{z}_2)\mapsto \frac{1}{2}\left(  \frac{\tilde{z}_1^2+\tilde{z}_2^2}{|h(z_1+iz_2)|^2}\right) \quad \text{and} \quad p_k = \frac{\tilde{z}_k}{|h(z_1+iz_2)|^2} 
\end{equation}
with $|h(z_1+iz_2)|^2 = h_1(z_1,z_2)^2+h_2(z_1,z_2)^2$. For a given point $z \in \C$, $h(z) \in \C$, this Lagrangian defines a norm by
$(\tilde{z}_1, \tilde{z}_2) \mapsto \sqrt{L(z_1,z_2,\Tilde{z}_1,\Tilde{z}_2)}$. 
The inducing metric of $\sqrt{L}$ can be expressed by 
\begin{equation}\label{eq:PMmetric}
g_z: \R^2 \times \R^2 \to \R, \quad (v,w) \mapsto \frac{1}{2}\frac{\langle v,w \rangle}{|h(z)|^2}. 
\end{equation}
If $h$ is an entire function, then $(\R^2,g)$ becomes a Riemannian manifold (while the tuple $(\R^2, \sqrt{L})$ has the structure of a Finsler manifold, but this is out of scope for this work). If $h$ is a holomorphic, non-entire function, we may consider the domain $\Bar{U}_h \subset \C$ of $h$ and define 
$U_h := \{(z_1,z_2) \in \R^2~|~\text{s.t.}~z_1+iz_2 \in U_h  \}$
and conclude that $(U_h,g)$ is a Riemannian manifold which we call the $h$-manifold. Applying this construction to the functions $ih,-h$ and $-ih$ yield the same manifold. 

The solutions $z(t)$ of the holomorphic flow (in real time) become smooth curves in the $h$-manifold. The solutions of the sensitivity $\Delta z(z)$ (according to \cref{eq:SensSol}) are smooth vector fields by assigning the section
\begin{equation}\label{eq:hol_splitting}
    (\text{Re}(z),\text{Im}(z)) \mapsto (\text{Re}\left( \Delta z(z) \right), \text{Im}(\Delta z (z))) \in T_z \Bar{U}_h.
\end{equation}
Whenever we construct a vector field this way (by splitting into real- and imaginary part of a holomorphic function), we call it a \textit{holomorphic splitting}. This applies to the functions $h$ and $ih$ in particular.



Let $\partial_k = \partial_{k;z}$ denote the tangent vector in direction $z_k$ (for $k=\{1,2\}$) at $T_z \bar{U}_h$. For this basis of the tangent space, the coordinates of $g_{ij} =g_{ij;z}$ read 
\[
g_{11} = g_{22} = \frac{1}{h_1(z)^2+h_2(z)^2},\quad g_{12} = g_{21} = 0.
\]

\begin{remark}
    We use teletype letters to denote vector fields on the $h$-manifold, such as $\Xttt$ and $\Yttt$. The vector fields defined by the holomorphic splitting of $h$ and $ih$ are denoted by $\httt$ and $\Bar{\httt}$ respectively. We write the vector-valued coordinate functions consisting of real- and imaginary parts in bold letters in this section, e.g. $\textbf{X} = \left(X^1,X^2\right)^T$. Whenever we use a single upper index (without a lower one), it is not meant as an exponent, but only to mark components of tangential vector, consistent with common notation from differential geometry. We partiuclarly receive $h^1(z) = h_1(z)$, where $h_1$ is just the real part of $h$ according to ordinary calculus and $h^1$ is the tensor component of the tangent vector $\httt = \sum_k h^k \partial_k$ in $z_1$-$z_2$-coordinates. 
\end{remark}

The (unique) Levi-Civita connection $\nabla$ of this metric with respect is given can be described by
\begin{lemma}\label{lem:Christoffel}
Let $(z_1,z_2)$ be our fixed coordinates. Let $\Gamma_{ij}^k$ denote the christoffel symbols describing $\nabla$ with respect to the $(z_1,z_2)$ are
\begin{align*}
	\Gamma_{\cdot \cdot}^1 &=
	\frac{1}{h_1(z)^2+h_2(z)^2} \left[
	\begin{array}{rr}
	-m_1 & -m_2\\
	-m_2 & m_1
	\end{array}
	\right], \\  \Gamma_{ \cdot \cdot}^2 &=
	\frac{1}{h_1(z)^2+h_2(z)^2} \left[
	\begin{array}{rr}
	m_2 & -m_1\\
	-m_1 & -m_2
	\end{array}
	\right]
	\end{align*}
	with $m_j:= h_1\frac{\partial h_1}{\partial z_j}+h_2\frac{\partial h_2}{\partial z_j}$ for $j = 1,2$.
 \end{lemma}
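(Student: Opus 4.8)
The statement is a pure computation: given the diagonal metric $g_{11}=g_{22}=\lambda$, $g_{12}=g_{21}=0$ with $\lambda := \lambda(z) = \bigl(h_1(z)^2+h_2(z)^2\bigr)^{-1}$, plug into the standard Koszul formula for the Levi-Civita connection,
\[
\Gamma_{ij}^k \;=\; \tfrac12\, g^{k\ell}\bigl(\partial_i g_{j\ell} + \partial_j g_{i\ell} - \partial_\ell g_{ij}\bigr),
\]
and simplify. The plan is first to record $g^{11}=g^{22}=\lambda^{-1}=h_1^2+h_2^2$, $g^{12}=0$, so that the sum over $\ell$ collapses to a single term $\ell=k$. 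This already gives $\Gamma_{ij}^k = \tfrac12\lambda^{-1}\bigl(\partial_i g_{jk}+\partial_j g_{ik}-\partial_k g_{ij}\bigr)$, and every $g_{ab}$ here is either $\lambda$ (when $a=b$) or $0$. Writing $\lambda = 1/(h_1^2+h_2^2)$ one has $\partial_j \lambda = -\lambda^2 \cdot \partial_j(h_1^2+h_2^2) = -2\lambda^2 m_j$ with $m_j = h_1\,\partial_j h_1 + h_2\,\partial_j h_2$ exactly as in the statement; hence $\tfrac12\lambda^{-1}\partial_j\lambda = -\lambda m_j = -\tfrac{m_j}{h_1^2+h_2^2}$, which is the scalar prefactor appearing in the claimed matrices.

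Next I would enumerate the four cases for the index pair $(i,j)$ and the two values of $k$. For $k=1$: $\Gamma_{11}^1 = \tfrac12\lambda^{-1}(2\partial_1 g_{11}-\partial_1 g_{11}) = \tfrac12\lambda^{-1}\partial_1\lambda = -\lambda m_1$; $\Gamma_{12}^1=\Gamma_{21}^1 = \tfrac12\lambda^{-1}(\partial_1 g_{21}+\partial_2 g_{11}-\partial_1 g_{12}) = \tfrac12\lambda^{-1}\partial_2\lambda = -\lambda m_2$; $\Gamma_{22}^1 = \tfrac12\lambda^{-1}(2\partial_2 g_{21}-\partial_1 g_{22}) = -\tfrac12\lambda^{-1}\partial_1\lambda = +\lambda m_1$. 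This reproduces $\Gamma^1_{\cdot\cdot} = -\lambda\left[\begin{smallmatrix} m_1 & m_2\\ m_2 & -m_1\end{smallmatrix}\right]$, matching the lemma. The case $k=2$ is symmetric under swapping the roles of the indices $1\leftrightarrow 2$ in the "column" slot: $\Gamma_{11}^2 = -\tfrac12\lambda^{-1}\partial_2\lambda = \lambda m_2$, $\Gamma_{12}^2=\Gamma_{21}^2 = \tfrac12\lambda^{-1}\partial_1\lambda = -\lambda m_1$, $\Gamma_{22}^2 = \tfrac12\lambda^{-1}\partial_2\lambda = -\lambda m_2$, giving $\Gamma^2_{\cdot\cdot} = \lambda\left[\begin{smallmatrix} m_2 & -m_1\\ -m_1 & -m_2\end{smallmatrix}\right]$ as claimed. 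Uniqueness of the connection is not something to prove here — it is the classical fundamental theorem of Riemannian geometry, which I would simply cite.

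There is no real obstacle; the only thing to be careful about is sign bookkeeping in $\partial_j \lambda = -\lambda^2\partial_j(h_1^2+h_2^2)$ and consistently using $\partial_j(h_1^2+h_2^2) = 2(h_1\partial_j h_1 + h_2\partial_j h_2) = 2m_j$, so that the overall prefactor $\tfrac12 g^{kk}\partial_j g_{aa}$ lands on $-m_j/(h_1^2+h_2^2)$ rather than twice or half that. One may optionally remark that, because $h$ is holomorphic, the Cauchy–Riemann equations let one rewrite $m_1, m_2$ more symmetrically (e.g. $m_1 = \tfrac12\partial_1|h|^2$, $m_2=\tfrac12\partial_2|h|^2$, and $\partial_2 h_1 = -\partial_1 h_2$, $\partial_2 h_2 = \partial_1 h_1$), but this is not needed for the statement as written and I would relegate it to a subsequent remark. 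Thus the proof is just the substitution above carried out explicitly.
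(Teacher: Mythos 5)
Your computation is correct and follows essentially the same route as the paper's appendix proof: both write the metric as the conformal factor $\lambda=(h_1^2+h_2^2)^{-1}$ times the identity, compute $\partial_j\lambda=-2\lambda^2 m_j$, and evaluate the Koszul/coordinate formula case by case (the paper merely passes through the Christoffel symbols of the first kind before raising the index, which you absorb in one step). No gaps; all six entries check out against the stated matrices.
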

 \begin{proof}
     Done in the appendix.
 \end{proof}
Applying \Cref{lem:Christoffel}, we can express the Levi-Civita connection in $z_1$-$z_2$ coordinates. 
 \begin{lemma}\label{lem:CovDeriv}
     Let $\eXttt=X^1(z)\partial_1+X^2(z)\partial_2$ be a holomoprhic splitting with coordinate functions $X^1(z_1,z_2)$ and $X^2(z_1,z_2)$. Let $J_{\emph{\Xbf}}$ and $J_{\emph{\hbf}} \in \R^{2,2}$ denote the jabocian matrices of $\emph{\Xbf}$ and $\emph{\hbf}$ with respect to $z_1$ and $z_2$. Then, the covariant derivatives in direction of $\ehttt$ and $\bar{\ehttt}$ read
	\begin{equation}\label{eq:cov_holsys}
		\nabla_\ehttt \eXttt = \sum_k(J_\emph{\Xbf} \emph{\hbf} - J_\emph{\hbf} \emph{\Xbf})^k \partial_k, \quad \nabla_{\bar{\ehttt}} \eXttt = \sum_k E(J_\emph{\Xbf} \emph{\hbf} - J_\emph{\hbf} \emph{\Xbf})^k \partial_k.
	\end{equation}
	The matrix $E$ is defined by \[E = \left( \begin{array}{rr}
	0&-1 \\1 &0
	\end{array} \right).
	\] 
 \end{lemma}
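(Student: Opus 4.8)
\textbf{Proof plan for \Cref{lem:CovDeriv}.}

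The plan is to compute $\nabla_{\ehttt}\eXttt$ directly in the fixed $(z_1,z_2)$-coordinates using the formula $(\nabla_{\ehttt}\eXttt)^k = \sum_i h^i\,\partial_i X^k + \sum_{i,j} \Gamma^k_{ij} h^i X^j$, with the Christoffel symbols supplied by \Cref{lem:Christoffel}. The first term assembles into $J_{\textbf{X}}\textbf{h}$, so the whole task reduces to showing that the quadratic-in-$(\textbf{h},\textbf{X})$ Christoffel contribution equals $-J_{\textbf{h}}\textbf{X}$. Here one must exploit that $\eXttt$ is a holomorphic splitting: the Cauchy--Riemann equations $\partial_1 X^1 = \partial_2 X^2$ and $\partial_1 X^2 = -\partial_2 X^1$ (and likewise for $\textbf{h}$) let one rewrite the entries of $J_{\textbf{h}}$ and relate the $m_j = h_1\partial_j h_1 + h_2 \partial_j h_2$ appearing in $\Gamma^k_{ij}$ to the derivatives of the $X^k$. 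Concretely, I would substitute the matrices $\Gamma^1_{\cdot\cdot}$, $\Gamma^2_{\cdot\cdot}$ into the sum $\sum_{i,j}\Gamma^k_{ij} h^i X^j$, factor out $1/(h_1^2+h_2^2)$, and expand; the numerator should collapse, after using the CR equations for $h$ to express $m_1,m_2$ via a single pair of partials (say $a := h_1\partial_1 h_1 + h_2\partial_1 h_2$ and $b := h_1\partial_2 h_1 + h_2\partial_2 h_2$, with $\partial_2 h_1 = -\partial_1 h_2$ etc.), into something proportional to $(h_1^2+h_2^2)$ times $-J_{\textbf{h}}\textbf{X}$, so the prefactor cancels cleanly.

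For the second identity, $\nabla_{\bar{\ehttt}}\eXttt = \sum_k E(J_{\textbf{X}}\textbf{h} - J_{\textbf{h}}\textbf{X})^k\partial_k$, I would not redo the computation: since $\bar{\ehttt}$ is the holomorphic splitting of $ih$, its coordinate vector is $\bar{\textbf{h}} = E\textbf{h}$ (because multiplication by $i$ acts on $\R^2\cong\C$ as the rotation $E$), and by \Cref{lem:Christoffel} the $h$-manifold is unchanged under $h\mapsto ih$, so the metric and $\nabla$ are the same. Then $\nabla_{\bar{\ehttt}}\eXttt = \nabla_{E\ehttt}\eXttt$, and by $C^\infty$-linearity of $\nabla$ in the lower slot together with the first formula applied with $\textbf{h}$ replaced by $E\textbf{h}$, one gets $J_{\textbf{X}}(E\textbf{h}) - J_{E\textbf{h}}\textbf{X}$; it then remains to check the linear-algebra identity $J_{\textbf{X}}E - J_{E\textbf{h}}\cdots = E(J_{\textbf{X}} - J_{\textbf{h}}\cdots)$ on the relevant terms, which again rides on the CR equations (the Jacobian of a holomorphic splitting commutes appropriately with $E$, since $J_{\textbf{h}}E = EJ_{\textbf{h}}$ for CR matrices, and $J_{E\textbf{h}} = E J_{\textbf{h}}$).

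The main obstacle I anticipate is purely bookkeeping: keeping the index gymnastics in $\sum_{i,j}\Gamma^k_{ij}h^iX^j$ straight while simultaneously invoking the Cauchy--Riemann relations for \emph{both} $\textbf{h}$ and $\textbf{X}$ in the right places, so that the $(h_1^2+h_2^2)$ in the denominator genuinely cancels and does not leave a spurious rational remainder. A clean way to manage this is to introduce the abbreviations $m_1,m_2$ from \Cref{lem:Christoffel} and the analogous $n_j := h_1\partial_j X^1 + h_2\partial_j X^2$ — wait, rather I would express everything through $\partial_1 h_1, \partial_1 h_2, \partial_1 X^1, \partial_1 X^2$ alone (eliminating all $z_2$-derivatives via CR), at which point each component of the claimed identity becomes a short polynomial identity in these four quantities and $h_1,h_2,X^1,X^2$ that can be verified termwise. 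Once that is set up, both formulas in \eqref{eq:cov_holsys} follow, and the $E$-version is essentially free from the $h\mapsto ih$ invariance of the construction.
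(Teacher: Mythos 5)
Your proposal is correct and follows essentially the same route as the paper's own proof: expand $\nabla_{\texttt{h}}\texttt{X}$ in coordinates, identify the derivative term with $J_{\mathbf{X}}\mathbf{h}$, collapse the Christoffel contribution to $-J_{\mathbf{h}}\mathbf{X}$ via the Cauchy--Riemann equations for $h$, and obtain the $\bar{\texttt{h}}$-formula from the invariance of the metric under $h\mapsto ih$ together with $J_{E\mathbf{h}}=EJ_{\mathbf{h}}$ and the commutation of $E$ with the CR-type matrix $J_{\mathbf{X}}$. No substantive differences.
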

 \begin{proof}
     Done in the appendix.
 \end{proof}
 When we apply \cref{eq:cov_holsys} to the vector fields $\Xttt = \httt$ and $\Xttt = \Bar{\httt}$, we receive $\nabla_{\httt} \httt = 0$ and $\nabla_{\Bar{\httt}} \Bar{\httt} =0$, meaning that the solution curves and of the real- and imaginary time curves are geodesics with respect to $\nabla$. However, there are additional classes of vector fields $\Xttt$ which are parallel to $\httt$ or $\Bar{\httt}$ respectively:

 \begin{lemma}\label{lem:parallelX}
Let $\emph{\Xttt}$ be a vector field of the form
\[
\eXttt(z_1,z_2) = \sum_k X^k(z_1,z_2) \partial_k \quad \text{with} \quad
 \begin{pmatrix}
 X^1(z_1,z_2) \\
 X^2(z_1,z_2)
 \end{pmatrix} = 
\left( \begin{array}{rr}
a & -b \\b & a
\end{array} \right) \begin{pmatrix}
h_1(z_1,z_2) \\ h_2(z_1,z_2)
\end{pmatrix}
\]
with $a$ and $b$ real numbers. Then, we receive $\nabla_\ehttt \eXttt =0 $ and $\nabla_{\bar{\ehttt}} \eXttt = 0$.  
\end{lemma}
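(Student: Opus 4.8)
The plan is to apply \Cref{lem:CovDeriv} directly, since $\eXttt$ as defined here is manifestly a holomorphic splitting: the matrix $\left(\begin{smallmatrix} a & -b \\ b & a \end{smallmatrix}\right)$ acting on $(h_1,h_2)^T$ is exactly the real-coordinate representation of multiplication of the complex number $h(z)$ by the fixed complex constant $a+ib$, so $\eXttt$ is the holomorphic splitting of the holomorphic function $(a+ib)\,h$. (This is the key observation: constant complex scaling of $h$ stays within the class to which \Cref{lem:CovDeriv} applies.) Consequently, by \cref{eq:cov_holsys}, it suffices to show that $J_{\textbf{X}}\,\textbf{h} - J_{\textbf{h}}\,\textbf{X} = 0$ as a vector in $\R^2$; the second identity $\nabla_{\bar{\ehttt}} \eXttt = 0$ then follows immediately by applying the matrix $E$ to the zero vector.

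First I would compute $J_{\textbf{X}}$. Writing $C := \left(\begin{smallmatrix} a & -b \\ b & a \end{smallmatrix}\right)$, we have $\textbf{X} = C\,\textbf{h}$ with $C$ constant, hence $J_{\textbf{X}} = C\, J_{\textbf{h}}$ by the chain rule. Therefore
\[
J_{\textbf{X}}\,\textbf{h} - J_{\textbf{h}}\,\textbf{X} = C\, J_{\textbf{h}}\,\textbf{h} - J_{\textbf{h}}\, C\, \textbf{h} = (C\, J_{\textbf{h}} - J_{\textbf{h}}\, C)\,\textbf{h}.
\]
So the statement reduces to the claim that $C$ commutes with $J_{\textbf{h}}$. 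This is where the Cauchy--Riemann equations enter: since $h = h_1 + i h_2$ is holomorphic, $\partial_1 h_1 = \partial_2 h_2$ and $\partial_1 h_2 = -\partial_2 h_1$, so $J_{\textbf{h}} = \left(\begin{smallmatrix} \partial_1 h_1 & \partial_2 h_1 \\ \partial_1 h_2 & \partial_2 h_2 \end{smallmatrix}\right) = \left(\begin{smallmatrix} u & -v \\ v & u \end{smallmatrix}\right)$ for $u := \partial_1 h_1$, $v := \partial_1 h_2$. Both $C$ and $J_{\textbf{h}}$ thus lie in the commutative algebra of matrices of the form $\left(\begin{smallmatrix} \alpha & -\beta \\ \beta & \alpha \end{smallmatrix}\right)$ (the image of $\C$ under the standard representation), so $C\, J_{\textbf{h}} = J_{\textbf{h}}\, C$, and the bracket vanishes identically.

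There is essentially no obstacle here; the only thing to be careful about is the bookkeeping for the non-entire case, where one should note that the computation is pointwise on $U_h$ and the Cauchy--Riemann equations hold throughout the domain, so nothing changes. Alternatively, one could phrase the whole argument in complex notation — $\eXttt$ corresponds to $(a+ib)h$, $\nabla_\ehttt$ is (up to the identification in \Cref{lem:CovDeriv}) the operator $\eXttt \mapsto (a+ib)\bigl(h\,\partial_z h - h\,\partial_z h\bigr) = 0$ — which makes the vanishing transparent, but the matrix-commutator route is the most direct given the form in which \Cref{lem:CovDeriv} is stated, and I would present that one.
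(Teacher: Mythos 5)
Your proposal is correct and follows essentially the same route as the paper's proof: identify $\eXttt$ as the holomorphic splitting of $(a+ib)h$, use $J_{\Xbf}=CJ_{\hbf}$ and the commutativity of the complex-multiplication matrices (via Cauchy--Riemann) to make the bracket $J_{\Xbf}\hbf - J_{\hbf}\Xbf$ vanish, then invoke \Cref{lem:CovDeriv}. No gaps; nothing further to add.
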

\begin{proof}
Let the matrices
\[A = \left( \begin{array}{rr}
a & -b \\b & a
\end{array} \right)\quad \text{and} \quad E = \left( \begin{array}{rr}
	0&-1 \\1 &0
	\end{array} \right)
 \] be defined as above. Since $h$ is a holomorpic function, $J_\hbf$ has the same from as $A$ (which is evidently true for $E$).  A direct calculation shows that the matrices of the form $A$ and $J_\hbf$ commute (as each represents a holomorphic derivative - which commute). Another calculation shows $J_{A \hbf} = A J_{\hbf}$ (which corresponds to the linearity of the complex derivative: $(ch)' = c h'$ if $c\in \C$ is constant).
 Hence, we can compute
\[
J_{A\hbf} \hbf -J_{\hbf} A\hbf = A J_{\hbf} \hbf- J_{\hbf}A\hbf = (A J_{\hbf} - J_{\hbf} A)\hbf = 0.
\]
Applying \Cref{lem:CovDeriv} implies both $\nabla_\texttt{h} \Xttt =0 $ and $\nabla_{\bar{\texttt{h}}} \Xttt = 0$.
\end{proof}
Evidently, the former lemma can be applied to the holomorphic splitting of the solutions of the sensitivity $\Delta z$. More specifically:
\begin{remark}
Let $z_0 \in U_h$ and $(\Delta z)_0 \in \C$ be arbitrary, then (compare to \cref{eq:SensSol}) the solution of the sensitivity of the flow of $\dot{z} = h(z)$ is
\[
	\Delta z (z)=  \beta h(z) \quad \text{with} \quad \beta= \frac{\Delta z_0}{h(z_0)} \in \C
\]
When representing $z$ and $\Delta z$ by real- and imaginary parts, we receive the function
\[
	\begin{pmatrix}
	(\Delta z (z_1,z_2))_1 \\
	(\Delta z (z_1,z_2))_2
	\end{pmatrix} = A \begin{pmatrix}
	h_1(z_1, z_2)\\
	h_2(z_1,z_2)
	\end{pmatrix} \quad \text{with} \quad A = \begin{pmatrix}
	a& -b \\ b & a  
	\end{pmatrix}
\]
as well as $a= \text{Re}\left( \frac{\Delta z_0}{h(z_0)} \right)$ and $b= \text{Im}\left( \frac{\Delta z_0}{h(z_0)} \right)$. Let $\mathrm{D} =\mathrm{D}(z_0,\Delta z_0)$ be the holomorphic splitting of the solution $\Delta z (z_0,\Delta z_0)$ of the sensitivity equation. Then, we receive
\[
\nabla_\httt \mathrm{D} = 0, \quad \nabla_{\Bar{\httt}} \mathrm{D} =0
\]
on the $h$-manifold. The vectors fields $\httt$ and $\Bar{\httt}$ form a basis of every tangent space $T_z \Bar{U}_h$. Thus, we can also imply $\nabla_{\textstyle Y_z} D =0$ for every point $z$ and tangent vector $Y_z \in T_z$ or simply write $\nabla D =0$. \textit{The holomorphic splittings of the sensitivity are parallel vector fields on the} $h$-\textit{manifold.}
\end{remark}


 The $h$-manifold hereby provides a 'natural' geometrical framework encoding the central properties of the Hamiltonian system: The differential equations for the sensitivity and the momentum are tangent- and cotangent dynamics respectively, and therefore coordinate-free. The real- and imaginary time solutions of the holomorphic flow are geodesics and the sensitivity solutions for every initial value are parallel vector fields on the $h$-manifold.
 
 The complex-time solution map $t \mapsto z(t)$ of the holomorphic flow of $h$ is a homeomorphism from a small time-disc to an open subset $U \subset\bar{U}_h \subset \C$ of the complex domain $\bar{U}_h$ \cite{Ilyashenko2008} and therefore a coordinate chart. This also applies to the complex-time solution map $T\mapsto z(T)$ of the newton-flow (as long as $h'(z(0)) \ne 0$). These charts are \textit{complexly} compatible and the calculations of \cref{subsec:Xi_newt} describe the (complex $1 \times 1$) transition matrix for the base tangent- and cotangent vectors of these charts:
 \[ \dfrac{\d}{\d T} = \dfrac{\d t}{\d T} \dfrac{\d}{\d t} = -h'(z(t)) \dfrac{\d}{\d t},\qquad 
 \d T = \dfrac{\d T}{\d t} \d t = 
 - \dfrac{1}{h'(z(t))} \d t. \]
These arguments also apply to each Riemann surface of the solution of the Hamiltonian system $H(z,p)$ (see \cref{eq:HamFlow}), defining 'solution charts' $t\mapsto (z(t),p(t))$ and $T\mapsto (z(T),p(T))$ from these small time-discs to open subset of the Riemann surfaces.

 Splitting $t = t_1 + i t_2$ and $T = T_1 +iT_2$ and the solution mapping $z= z_1 + i z_2$ into real- and imaginary parts, we receive two-dimensional real-valued solution mappings from open discs (in $\R^2$) to the $h$-manifold, which are \textit{smoothly} compatible charts for the $h$-manifold. In $(t_1, t_2)$-coordinates, the metric $g$ reads $g_{ij,t} = \delta_{ij}$, with $\delta_{ij}$ being the Delta function (trivial expression of the metric $g$). This especially implies that all $h$-manifolds are curvature-free.

\section{Numerical Studies} 
In this section, we numerically study the Hamiltonian system proposed in \Cref{subsec:HamSyst} and visualize the solution structure of the momentum $p$ as well as the sensitivities $\Delta z$ and $\Delta p$, depending on the state variable $z$. The corresponding solution-formulas are given in \Cref{subsec:Xi_newt,subsec:HamStability} for the $\xi$-function, but directly transfer to the class of Hamiltonian functions $H(z,p) = h(z)p$ with arbitrary holomorphic  $h$ (compare \Cref{sec:HamSens} and \cref{eq:ImpSol,eq:SensSol} in particular). 

\Cref{fig:qp_surf,fig:qdq_surf} show the real-time solutions flows of the $\xi$-approximating polynomial \cref{eq:XiApprPoly} are periodic orbits around each simple roots. 
We inspect one of these orbits (polynomial $h_8$) and exemplify how the solutions $p, \Delta z$ and $\Delta p$ change when the real-time trajectory $z(t)$ transits one orbit cycle. \Cref{fig:qp_surf,fig:qdq_surf} depict absolute values of these quantities (vertical axis). Here, we study their argument.

\subsection*{Sensitivities $\Delta z$ and Momentum $p$ along Closed Orbits}

Recall the solution \cref{eq:ImpSol,eq:SensSol} for $p$ and $\Delta z$. They especially imply that, if $z(t)$ passes through a periodic orbit, then $p$ and $\Delta z$ also behave periodically ($p$ and $\Delta z$ are functions of $z$).

\begin{figure}[bhtp]
\floatbox[{\capbeside\thisfloatsetup{capbesideposition={right,center},capbesidewidth=3.6cm}}]{figure}[\FBwidth]
{\caption{Time propagation of the direction of sensitivity $\Delta z(z;\Delta z_0)$ around periodic orbit of the holomorphic flow of $h_8$ (featuring simple roots only). Different colors indicate different initial values momentum $\Delta z_0$: One tangential- and one normal to the orbit (red- and burgundy-colored respectively). Lengths of arrows are normalized to one.}\label{fig:HamDqTwist}}
{\includegraphics[scale=0.27]{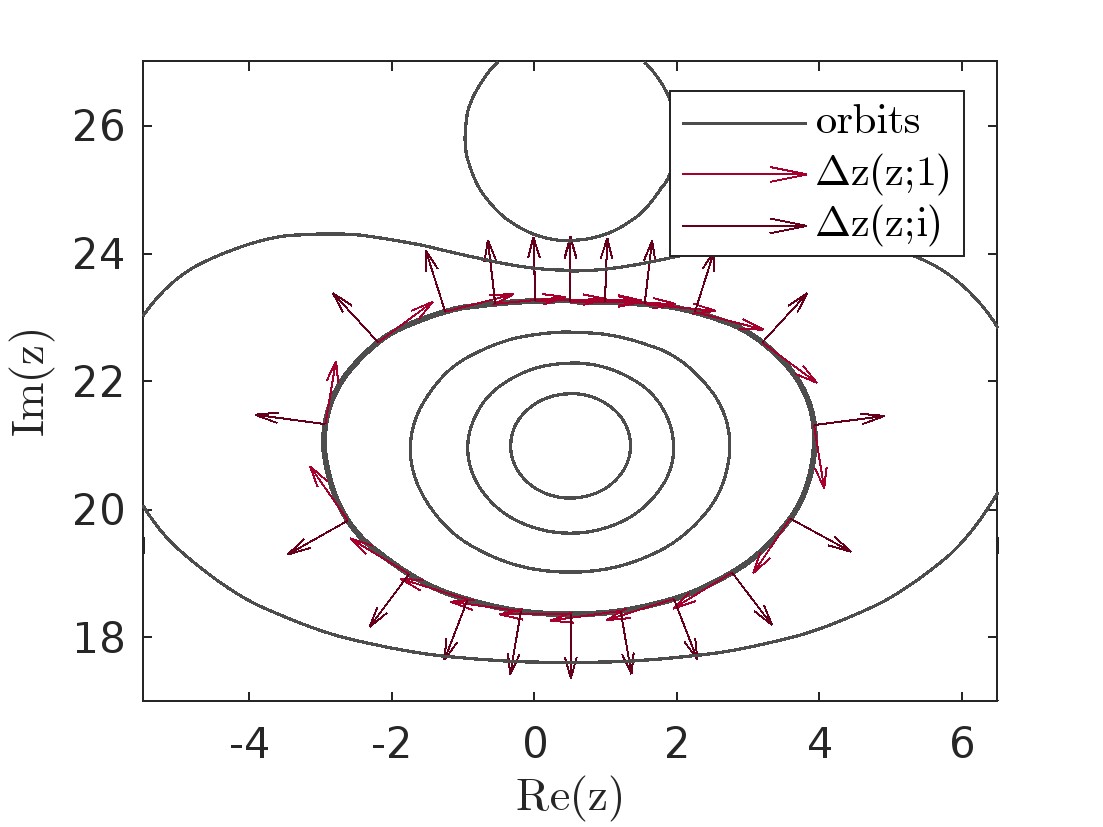}}
\end{figure}

 Rewriting $\Delta z_0 = \alpha h(z_0)$ with  $\alpha = \Delta z_0/h(z_0) \in \C$ and insertion into the solution formula (\cref{eq:SensSol}) leads to
\begin{equation}\label{eq:sens_linear}
    \Delta z(z) = \frac{h(z)}{h(z_0)}\alpha h(z_0) = \alpha h(z).
\end{equation}

This $\C$-linear dependency on $h(z)$ implies that angles between sensitivities (for different initial values $\Delta z_0$) are preserved by time-propagation of the holomorphic flow. 
A particular consequence of 
\cref{eq:sens_linear} is flow-invariance of the tangent- and normal bundle: If $\alpha \in \R$, then $\Delta z$ starts- and remains tangential to the real-time solution trajectory. If $\alpha \in i\R$, the initial vector $\Delta z_0$ starts- and remains in the normal bundle of this solution trajectory.  These findings are exemplified in \Cref{fig:HamDqTwist} for one periodic orbit of the holomorphic real-time flow of the polynomial $h_8$. The red- and burgundy-colored normalized sensitivity-vectors remain tangentially- and normally to the trajectory, twisting ones when transiting this periodic orbit.

\begin{figure}[htbp]
\floatbox[{\capbeside\thisfloatsetup{capbesideposition={right,center},capbesidewidth=3.5cm}}]{figure}[\FBwidth]
{\caption{Time propagation of the direction of momentum variables $p(z;p_0)$ around periodic orbit of of the holomorphic flow of $h_8$ (featuring simple roots only). Different colors indicate different initial values of momentum $p_0$. Lengths of arrows are normalized to one.}\label{fig:HampTwist}}
{\includegraphics[scale=0.25]{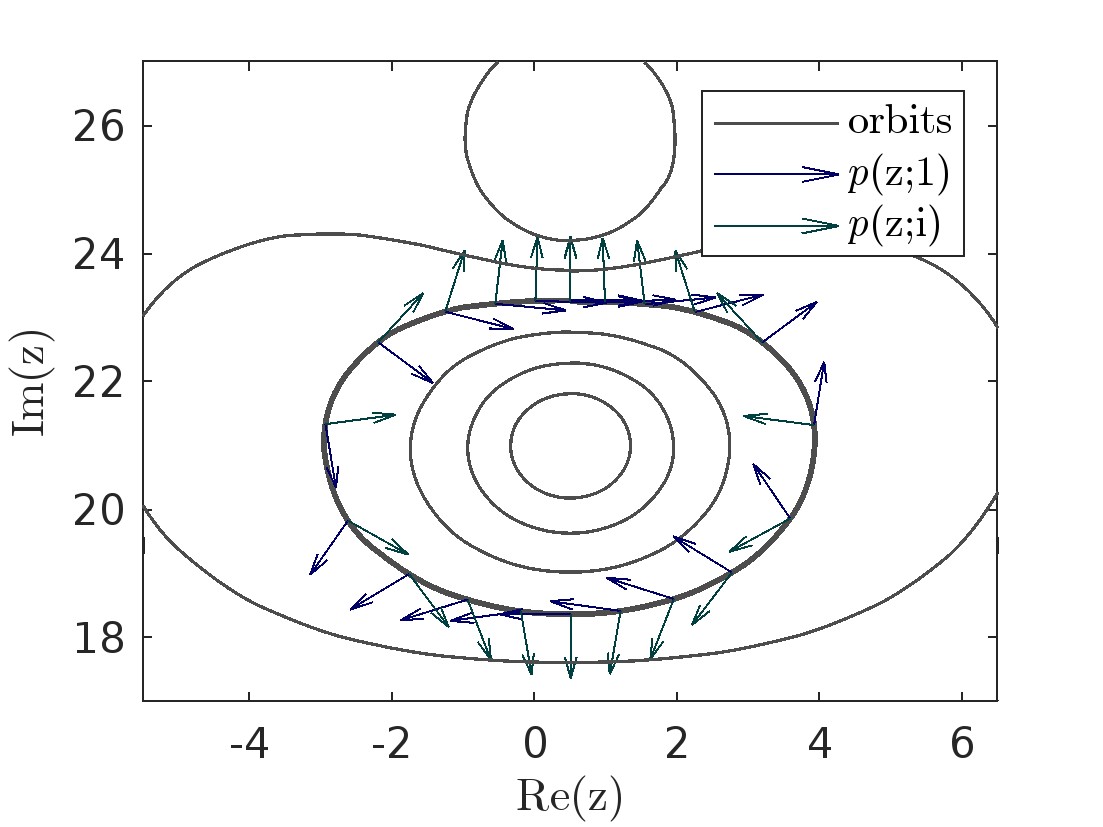}}
\end{figure}

Regarding the momentum $p$, similar aspects are studied: \cref{eq:ImpSol} implies a $\C$-linear dependency of $p$ on the initial value $p_0$. The angles between different initial values $p_0^{(1)}$ and $p_0^{(2)}$ are preserved throughout the orbit:

\begin{equation*}
    \arg\left(p^{(1)}\right)-\arg\left(p^{(2)}\right) \equiv \arg\left(\frac{p^{(1)}}{p^{(2)}}\right)  \equiv  \arg\left(\frac{p_0^{(1)}}{p_0^{(2)}}\right) \equiv \arg\left(p_0^{(1)}\right)-\arg\left(p_0^{(2)}\right) \quad (\text{mod  } \R/2\pi \Z).
\end{equation*}
This behavior is exemplified in \Cref{fig:HampTwist}, showing the example flow as \Cref{fig:HamDqTwist}. Here, we calculate the time propagation of the momentum $p$ for two different perpendicular initial values $p_0$. We see the periodic behavior of $p$ and the preservation of the angle between the different momenta (in blue and green). In contrast to the sensitivity $\Delta z$, the momentum $p$ appear to 'twist' in the inverse rotation direction. This is due to the fact that $p \Delta z$ is constant under the Hamiltonian flow. As  result we receive
\[
\arg\left(\Delta z \right) + \arg \left( p \right) \equiv \arg\left(p \Delta z \right) \equiv \arg\left(p_0\Delta z_0 \right) \equiv
\arg\left(\Delta z_0 \right) + \arg \left( p_0 \right)
\quad (\text{mod  } \R/2\pi \Z).
\]
Hence, as $\Delta z$ turns in on direction (with the flow), $p$ turns against the flow, as shown in \Cref{fig:HampTwist}.

\subsection*{Second-Order Sensitivity (Momentum Sensitivity) $\Delta p$ along Closed Orbits}

Completing the numerical sensitivity analysis of the Hamiltonian flow \cref{eq:HamFlow}, we study the sensitivity of the momentum $p$, denoted by $\Delta p$. Generalizing \cref{eq:DqXiODE} - for the case of the $\xi$-function - the sensitivity equation for $\Delta p$ reads
\[
\dd{t}\Delta p = -h''(z) \Delta z_0 p_0 - h'(z) \Delta p, \qquad \Delta p(z(0)) = \Delta p_0 \in \C.
\]

Due to the connection between $\Delta z$ and $p$ (analysed in \Cref{sec:HamSens} ), $\Delta p$ is linked to the sensitivity $\Delta^2 z := \Delta (\Delta z)$ of $\Delta z$ ('second-order'-sensitivity), also reflected by the appearance of $h''$ in the previous ODE. Generalizing \cref{eq:vardglsol}, we obtain the unique solution
\begin{equation}\label{eq:DpSol}
    \Delta p = \Delta p (z;\Delta z_0,p_0, \Delta p_0) = \underbrace{\frac{(h'(z_0)-h'(z))}{h(z)} p_0 \Delta z_0}_{\textbf{(1)}} + \underbrace{\frac{h(z_0)}{h(z)} \Delta p_0}_{\textbf{(2)}}.
\end{equation}
The solution can be divided into two parts. Part \textbf{(1)} depends on the product of the initial values $p_0$ and $\Delta z_0$ and vanishes for $z = z_0$. In \Cref{fig:HamDpTwist1}, we see how the argument of this first part twists, when passing through a periodic orbit of the state variable $z$ (same flow and orbit as in \Cref{fig:HamDqTwist,fig:HampTwist} using the polynomial $h_8$). $\Delta p_0$ is set to zero in \Cref{fig:HamDpTwist1}. The black dot marks the initial value of the state variable $z_0$. 
The time-propagation sensitivities appears to be discontinuous at $z_0$, which is caused by the fact that we normalized the vectors in these figures and \textbf{(1)} vanishes at $z =z_0$. For both initial values, the argument makes about halve a turn in the direction opposite to the clockwise turning direction of the trajectory $z(t)$. The $\C$-linearity on $p_0$ and $\Delta z_0$ also implies the preservation of angles throughout this orbit (as observed in \Cref{fig:HamDqTwist,fig:HampTwist}).

\begin{figure}[htbp]
\floatbox[{\capbeside\thisfloatsetup{capbesideposition={right,center},capbesidewidth=3.5cm}}]{figure}[\FBwidth]
{\caption{Time propagation of the direction of sensitivity (of $p$) $\Delta p(z;\Delta z_0,p_0,\Delta p_0)$ around periodic orbit of the Hamiltonian system with based of the function $H(z,p)=h_8(z)p$. Different colors indicate different initial values of initial momentum $\Delta z_0$. Lengths of arrows are normalized to one. Black dot marks the initial value of $z_0$. Initial value $\Delta p_0$ is set to zero in both cases.}\label{fig:HamDpTwist1}}
{\includegraphics[scale=0.27]{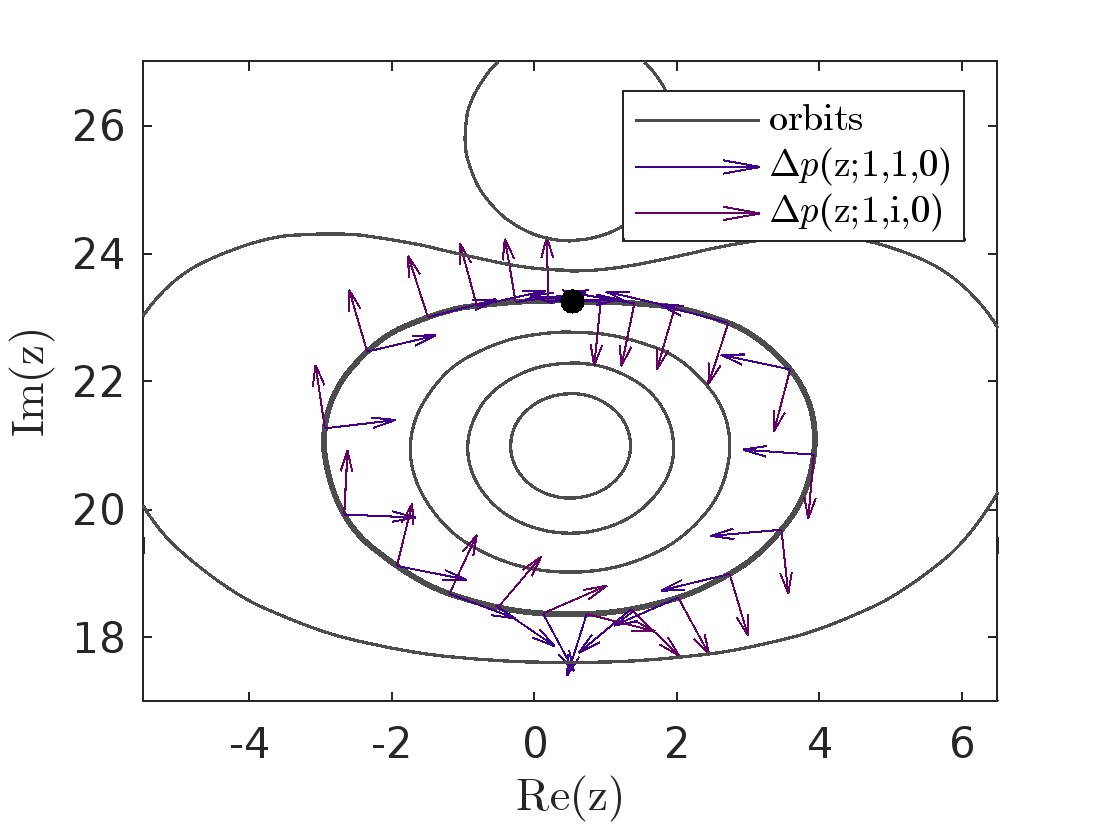}}
\end{figure}

\begin{figure}[htbp]
\floatbox[{\capbeside\thisfloatsetup{capbesideposition={right,center},capbesidewidth=3.5cm}}]{figure}[\FBwidth]
{\caption{Time propagation of the direction of sensitivity (of $p$) $\Delta p(z;\Delta z_0,p_0,\Delta p_0)$ around periodic orbit of the Hamiltonian system with based of the function $H(z,p)=h_8(z)p$. Different colors indicate different initial values of initial momentum $\Delta z_0$. Lengths of arrows are normalized to one. Black dot marks the initial value of $z_0$. Initial value $\Delta p_0$ is \textit{not} set to zero in both cases.}\label{fig:HamDpTwist2}}
{\includegraphics[scale=0.27]{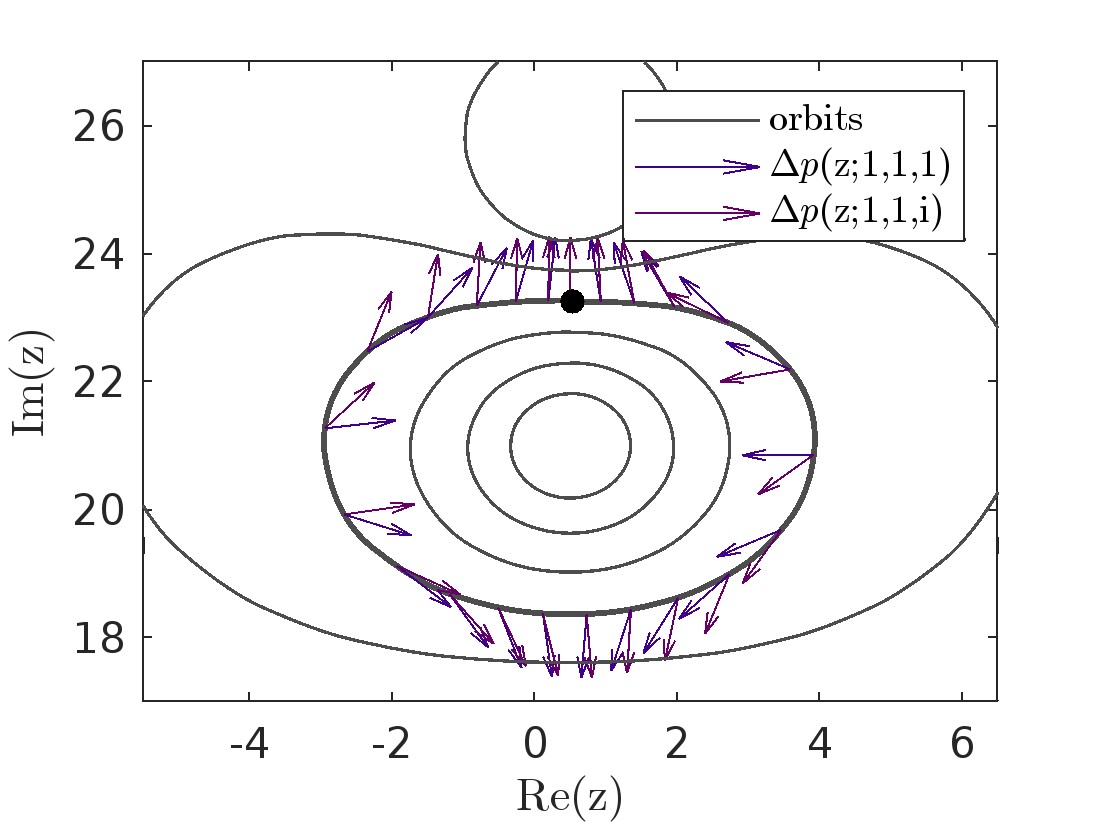}}
\end{figure}

Part \textbf{(2)} of the solution $\Delta p$ only depends $\Delta p_0$. This dependence coincides with $p$ depending on $p_0$ (see \cref{eq:ImpSol}). Hence, its behavior is already shown in \Cref{fig:HampTwist}. The general case, in which all the three initial values $p_0$, $\Delta z_0$ and $\Delta p_0$ don't vanish is depicted in the final \Cref{fig:HamDpTwist2}. Here, we pick a certain combination of initial values for $p_0$ and $\Delta z_0$. We use two different initial values for $\Delta p_0$. Both time-propagations $\Delta p$ twist ones anti-clockwise (similar to \Cref{fig:HampTwist}). This time, we see that the angles between the different solutions for $\Delta p$ change throughout the orbit. This is due to the fact that $\Delta p$ does not linearly depend on $\Delta p_0$ anymore, whenever $p\Delta z \ne 0$.

\section*{Appendix}
\textbf{Proof of \Cref{lem:Christoffel}}: 
\begin{proof}
    Let $z\in U\subset \C$ be fixed and $z_1$ and $z_2$ be the real- and imaginary part of $z$ respectively. The coordinates of the metric tensor with respect to $z_1$ and $z_2$ are indicated by $g_{ij}$. For the sake of shortness, we omit writing the dependence of $h_1$, $h_2$ on $z$ when evaluating these functions. The inverse metric tensor (indicated by $g^{ij}$) is then simply given by taking the inverse matrix of $g_{ij}$: 
    \begin{equation}\label{eq:invTensor}
         g_{ij} = \left(\frac{1}{h_1^2+h_2^2}\right)  \text{Id}_2, \quad g^{ij} = \left(h_1^2+h_2^2\right)\text{Id}_2
    \end{equation}
    with $\text{Id}_2$ representing the two-by-two identity matrix. We indicate the Christoffel symbols of the first- and second kind by $\Gamma_{k,ij}$ and $\Gamma_{ij}^k$ respectively. The indices $i$ and $j$ are the symmetrical ones. 
    We use the Koszul formula to calculate the Christoffel symbols:
    \begin{align*}
        \Gamma_{k,ij} &= \frac{1}{2} \left(\frac{\partial g_{ik}}{\partial z_j} + \frac{\partial g_{jk}}{\partial z_i} - \frac{\partial g_{ij}}{\partial z_k} \right)  \\
        \Gamma^k_{ij} &= \sum_{l = 1,2} g^{kl}\Gamma_{l,ij}
    \end{align*}
        for all $(i,j,k) \in \{1,2\}^3$.
    Note that
    \begin{align*}
        \frac{\partial g_{11}}{\partial z_l} = \frac{\partial g_{22}}{\partial z_l} &= -\frac{1}{(h_1^2+h_2^2)^2} \frac{\partial}{\partial z_l} \left( h_1^2+h_2^2 \right) \\
        &= -\frac{2}{(h_1^2+h_2^2)^2} \underbrace{\left( h_1 \frac{\partial h_1}{\partial z_l} +h_2 \frac{\partial h_2}{\partial z_l} \right).}_{\textstyle := m_l} 
    \end{align*}
    The Christoffel symbols of the first kind then read
    \begin{align*}
        \Gamma_{1,\cdot \cdot} &= \frac{1}{2} \left(
        \begin{array}{rr}
             \dfrac{\partial g_{11}}{\partial z_1} & \dfrac{\partial g_{11}}{\partial z_2} \\[2ex]
             \dfrac{\partial g_{11}}{\partial z_2}& -\dfrac{\partial g_{22}}{\partial z_1}\\ [2ex]
        \end{array}
        \right)
        = \frac{1}{(h_1^2+h_2^2)^2}
        \left(
        \begin{array}{rr}
             -m_1 & -m_2 \\
             -m_2& m_1\\ 
        \end{array}
        \right) \\
        \Gamma_{1,\cdot \cdot} &= \frac{1}{2} \left(
        \begin{array}{rr}
             -\dfrac{\partial g_{11}}{\partial z_2} & \dfrac{\partial g_{22}}{\partial z_1} \\[2ex]
             \dfrac{\partial g_{22}}{\partial z_1}& \dfrac{\partial g_{22}}{\partial z_2}\\ [2ex]
        \end{array}
        \right)
        = \frac{1}{(h_1^2+h_2^2)^2}
        \left(
        \begin{array}{rr}
             m_2 & -m_1 \\
             -m_1& -m_2\\ 
        \end{array}
        \right)
    \end{align*}
    Because of the form of the inverse metric tensor in \cref{eq:invTensor}, the Koszul formula implies $\Gamma_{ij}^k = (h_1^2+h_2^2)\Gamma_{k,ij}$ for all $(i,j,k) \in \{1,2\}^3$, proving the hypothesis.
\end{proof}
\textbf{Proof of \Cref{lem:CovDeriv}:}
\begin{proof}
    Recall that the Christoffel symbols of the second kind calculated in \Cref{lem:Christoffel} are the unique solutions of the system of equations
    \[
    \nabla_{\textstyle \partial_i}\partial_j = \sum_k \Gamma_{ij}^k \partial_k \quad \forall (i,j) \in \{1,2\}^2.
    \]
    Let $\Xttt$ be a holomorphic splitting and $\Yttt$ be an arbitrary vector field with coordinate functions (in $z$-coordinates) $X^j$ and $Y^i$, the linearity, product rule and last equality yield
    \begin{align} \label{eq:covDerXY}
    \begin{split}
         \nabla_{\textstyle \Yttt} \Xttt &= \sum_{i,j} Y^j \nabla_{\textstyle\partial_j}X^i \partial_i = \sum_{i,j} Y^j X^i \nabla_{\textstyle\partial_i}\partial_j + Y^j\partial_j(X^i)\cdot \partial_i \\
         &= \sum_k \underbrace{\left(  \sum_{i,j} Y^j X^i \Gamma_{ij}^k \right)}_{\textstyle\textbf{(I)}} \partial_k + \sum_k \underbrace{\left( \sum_j Y^j\partial_j(X^k) \right)}_{\textstyle\textbf{(II)}}\partial_k
    \end{split}
    \end{align}
    We see that \textbf{(II)} can be rewritten using matrix multiplication. Let $\Xbf = (X^1,X^2)^T$ and $\Ybf = (Y^1,Y^2)^T$ be the vectors of the component functions. Then,
    \begin{align*}
        \sum_j Y^j\partial_j(X^k) = \text{grad}\left(X^k(z_1,z_2)\right) \Ybf \quad \Rightarrow \sum_k\left( \sum_j Y^j\partial_j(X^k) \right)\partial_k = \left(J_{\Xbf}\Ybf\right)^T \begin{pmatrix}
            \partial_1\\ \partial_2
        \end{pmatrix}
    \end{align*}
    This former equality is applied to the vector fields $\httt$ and $\Bar{\httt}$ instead of $\Yttt$. Let $\hbf$ and $\Bar{\hbf} = E \hbf $ denote the respective vector functions ($E$ is defined in \Cref{lem:CovDeriv} and used below). Applying these vector fields leads to
    \begin{equation} \label{eq:(II)}
        \sum_k\left( \sum_j h^j\partial_j(X^k) \right)\partial_k = \left(J_{\Xbf}\hbf \right)^T \begin{pmatrix}
            \partial_1\\ \partial_2 \end{pmatrix}, \quad 
            \sum_k\left( \sum_j \Bar{h}^j\partial_j(X^k) \right)\partial_k = E \left(J_{\Xbf}  \hbf \right)^T \begin{pmatrix}
            \partial_1\\ \partial_2 \end{pmatrix}.
    \end{equation}
    In the last equality we used the fact that the matrices $J_{\Xbf}$ and $E$ commute. This is true, since $X$ is a holomorphic splitting, and therefore the Cauchy-Riemann equations hold, implying 
    \[
    J_\Xbf = \begin{pmatrix}
        a &-b\\b & a
    \end{pmatrix}
    \quad \text{with } a = \dfrac{\partial X^1}{\partial z_1} =\dfrac{\partial X^2}{\partial z_2}\quad \text{and } b = \dfrac{\partial X^2}{\partial z_1} = - \dfrac{\partial X^1}{\partial z_2}.
    \]
    A direct calculation shows that $E$ and $J_\Xbf$ commute. We now calculate \textbf{(I)}. If we insert $\httt$ instead of $\Yttt$ with component functions $h^j(z)$, we receive 
    \begin{align*}
        \sum_{i,j} h^j X^i \Gamma_{ij}^1 &= \frac{1}{(h_1^2+h_2^2)} \left( (h_1,h_2) \left(\begin{array}{rr}
         -m_{z_1}& -m_{z_2}  \\
         -m_{z_2}& m_{z_1} 
    \end{array} \right) \begin{pmatrix}
        X_1\\X_2
    \end{pmatrix} \right) \\
    &=\frac{1}{(h_1^2+h_2^2)} \left( (X_2h_2 -X_1h_1) \cdot \left(h_1 \dfrac{\partial h_1}{\partial z_1} + h_2 \dfrac{\partial h_2}{\partial z_1}\right)
     \right) \\
     &-\frac{1}{(h_1^2+h_2^2)} \left( (X_2h_1 +X_1h_2) \cdot \left(h_1 \dfrac{\partial h_1}{\partial z_2} + h_2 \dfrac{\partial h_2}{\partial z_2}\right)
     \right)\\
     &\overset{(\star)}{=} \frac{1}{(h_1^2+h_2^2)} \left( \dfrac{\partial h_1}{\partial z_1} \left(h_1\left(h_2X_2-h_1X_1 \right)-h_2\left(h_1X_2+h_2X_1 \right)  \right)\right) \\
     &+ \frac{1}{(h_1^2+h_2^2)} \left( \dfrac{\partial h_1}{\partial z_2} \left(-h_1\left(h_1X_2+h_2X_1 \right)+h_2\left(h_1X_1-h_2X_2 \right)  \right)\right) \\
     &=- (\text{grad }h_1(z_1,z_2))^T \Xbf
    \end{align*}
    where we used the Cauchy-Riemann equations on the derivatives of $\hbf$ in the step marked by $(\star)$.
    Similarly, for the second component $(k=2)$, we calculate
    \begin{align*}
        \sum_{i,j} h_j X_i \Gamma_{ij}^2 &= \frac{1}{(h_1^2+h_2^2)} \left( (h_1,h_2) \left(\begin{array}{rr}
         m_{z_2}& -m_{z_1}  \\
         -m_{z_1}& m_{z_2} 
    \end{array} \right) \begin{pmatrix}
        X_1\\X_2
    \end{pmatrix} \right) \\
    &=\ldots =   - (\text{grad }h_2(z_1,z_2))^T \Xbf
    \end{align*}
    Combining the last two calculations and applying them for the term (I) for $Y = \httt$, we receive
    \begin{equation}\label{eq:(I)}
        \sum_k \left(  \sum_{i,j} h_j X_i \Gamma_{ij}^k \right) = -\sum_k\left(J_{\hbf} \Xbf\right)_k \partial_k
    \end{equation}
    Combining the former equality with \cref{eq:(II)} implies the hypothesis for the vector field $\httt$. We can trace back the case $Y = \Bar{\httt}$ to the previous case by making the following argument:

    Consider the holomorphic function $ih$. Then, the $ih$-manifold and the $h$-manifold are equal: They share the same topology, structure and metric (since $|ih|^2 = |h|^2$). The holomorphic splitting of $ih$ is then exactly $\Bar{\httt}$ with $\Bar{h}_1=-h_2$ and $\Bar{h}_2=h_1$ representing the real- part and imaginary part respectively. 
    When applying \cref{eq:(I)} to $\Bar{\httt}$, we receive 
    \[
    \sum_k \left(  \sum_{i,j} \Bar{h}_j X_i \Gamma_{ij}^k \right) = -\sum_k\left(J_{\Bar{\hbf}} \Xbf\right)_k \partial_k =- \sum_k\left(E J_{\hbf} \Xbf\right)_k \partial_k
    \]
    where we used the fact that $J_{\Bar{\hbf}} = E J_{\hbf}$. Thus, the hypothesis is shown.
 \end{proof}
\section*{Acknowledgments}
The authors thank Marcus Heitel for technical support.

\bibliographystyle{unsrt}  
\bibliography{PaperLiterature}

\end{document}